\newtheorem{theorem}{Theorem}[section]
\newtheorem{lemma}[theorem]{Lemma}
\newtheorem{corollary}[theorem]{Corollary}
\newtheorem{definition}[theorem]{Definition}
\newtheorem{assumption}[theorem]{Assumption}
\newtheorem{remark}[theorem]{Remark}
\theoremstyle{definition}
\newcommand{\RR}{\mathbb{R}}
\newcommand{\NN}{\mathbb{N}}
\newcommand{\ZZ}{\mathbb{Z}}
\newcommand{\EE}{\mathbb{E}}
\newcommand{\PP}{\mathbb{P}}
\newcommand{\TT}{\mathbb{T}}
\newcommand{\DD}{\mathbb{D}}
\newcommand{\mL}{\mathcal{L}}
\newcommand{\mC}{\mathcal{C}}
\newcommand{\mM}{\mathcal{M}}
\newcommand{\mF}{\mathcal{F}}
\newcommand{\mD}{\mathcal{D}}
\newcommand{\mE}{\mathcal{E}}
\newcommand{\mS}{\mathcal{S}}
\newcommand{\mH}{\mathcal{H}}
\newcommand{\mf}[1]{\mathfrak{#1}}
\newcommand{\mfd}{\mathfrak{d}}
\renewcommand{\l}{\ell}
\newcommand{\para}{\varolessthan}
\newcommand{\reso}{\varodot}
\newcommand{\ve}{\varepsilon}
\newcommand{\vr}{\varrho}
\newcommand{\vt}{\vartheta}
\newcommand{\bigslant}[2]{{\raisebox{.1em}{$#1$}\left/\raisebox{-.1em}{$#2$}\right.}}
\newcommand*{\ud}{\mathrm{\,d}}
\newcommand{\znd}{\mathbb{Z}^d_n} 
\newcommand{\tnd}{\mathbb{T}^d_n}
\newcommand{\supp}{\operatorname{supp}}
\begin{document}

\title{Killed Rough Super-Brownian Motion}

\author{Tommaso Cornelis Rosati}
\address{Humboldt-Universit\"at zu Berlin}

\thanks{This paper was developed within the scope of the IRTG 1740 /
TRP 2015/50122-0, funded by the DFG / FAPESP}

\begin{abstract} 
  This note extends the results in \cite{PerkowskiRosati2019} to construct the
  rough super-Brownian motion on finite volume with Dirichlet boundary
  conditions. The backbone of this study is the convergence of discrete
  approximations of the parabolic Anderson model (PAM) on a box.
\end{abstract}

\maketitle 

\section{Introduction}

In \cite{PerkowskiRosati2019} a superprocess on infinite volume is constructed
(named rough super-Brownian motion, rSBM),
as a scaling limit of a branching random walk in a static random environment
(BRWRE).
In the quoted work, the analysis of persistence of the superprocess relies
on the existence of the same process on finite volume with Dirichlet boundary
conditions, due to the spectral properties of the Anderson Hamiltonian. The
construction of such process is the aim of the current work.

Such process is the scaling limit of the same branching particle system as in
\cite{PerkowskiRosati2019}, where
any particle is killed as soon as it leaves a box of size \(L\). Morally, this
scaling limit is simpler to treat than in the infinite volume case, since
explosions are less likely to happen. Indeed the convergence of the empirical
measure associated to the particle system is an application of the results in
\cite[Section 3]{PerkowskiRosati2019}.

On a more technical level, the construction in \cite{PerkowskiRosati2019} relies
on the tools of \cite{MartinPerkowski2017} for discrete approximations of the
parabolic Anderson model (PAM) on infinite volume. In this work we extend the
latter approach within the framework of \cite{Chouk2019} for paracontrolled
analysis with Dirichlet boundary conditions, with the aim of proving the
convergence of discrete approximations to PAM with
Dirichlet boundary conditions. That is, we study the equation:
\begin{equation}\label{eqn:pam_dirichlet_continuous} 
  \begin{aligned} \partial_t
  w(t,x) &= \Delta w(t,x) {+} \xi(x) w(t,x) {+} f(t,x), \qquad & (t,x) \in
  (0,T]\times(0,L)^d,\\ w(0,x) &= w_0(x), \qquad w(t, x) = 0, & (t, x) \in
  (0,T] \times \partial [0,L]^d,  
  \end{aligned} 
\end{equation}
where \(\xi\) is space white noise. The details are explained in the next section.

\proof[Acknowledgements.] We are very grateful to Nicolas Perkowski for the kind
help in the preparation of this note.  

\section{PAM with Dirichlet Boundary Conditions}\label{sect:PAM_dirichlet_bc}

Define \(\NN = \{1, 2, \dots\}, \NN_{0} = \NN \cup \{0\}\). Fix \(L \in \NN\)
and \(N = 2L\).  Consider $n \in \NN \cup \{ \infty\}$ ($n=\infty$ refers to the
continuous case, studied in \cite{Chouk2019}).  Write \(\znd\) for the lattice
\(\frac{1}{n} \ZZ^{d}\) (resp.  \(\mathbb{R}^{d}\) if \(n = \infty\)),
$\Lambda_{n}$ for the lattice $\frac{1}{n}( \mathbb{Z}^{d}\cap \left[ 0, L n
\right]^{d} )$ (resp. $[0, L]^d$), $\Theta_{n}$ for the lattice
$\bigslant{\frac{1}{n}( \mathbb{Z}^{d}\cap [{-}\frac{N n}{2}, \frac{N n}{2}
]^{d} )}{\sim }$ with opposite boundaries identified (resp.  $\TT^d_{N}\colon =
\bigslant{[{-} \frac{N}{2} , \frac{N}{2} ]^{d}}{\sim}$) and define the ``dual
lattice'' \(\Xi_{n}= \bigslant{ \frac{1}{N} ( \mathbb{Z}^{d}\cap [{-}
\frac{Nn}{2}, \frac{Nn}{2}]^{d} )}{\sim }\), (resp. \(\frac{1}{N} \ZZ^d\)) as
well as \(\Xi_{n}^{+}= \frac{1}{N} ( \mathbb{Z}^{d}\cap [0, Ln]^{d} )\), (resp.
$\frac{1}{N}\mathbb{N}_0^d$) and \(\partial \Xi^{+}_{n} = \{k \in \Xi^{+}_{n} \
: \ k_i = 0 \text{ for some } i \in \{1, \ldots, d\} \}\). Write
\(A_{\mf{d}}^{n} = \Xi_{n}^{+} \setminus
\partial \Xi_{n}^{+}, A_{\mf{n}}^{n}= \Xi_n^+ \). Finally, for \(p \geq 1\) and
any function \(f \colon \Theta^{n} \to \RR\), write \(\| f
  \|_{L^{p}(\Theta^{n})} = ( n^{{-} d} \sum_{x \in
\Theta^{n}}|f(x)|^p)^{\frac{1}{p} }\) (resp. the classical
  \(L^{p}([{-} \frac{N}{2}, \frac{N}{2} ]^{d} )\) norm if \(n
= \infty\)).

\subsection{The Analytic Setting}

The idea of \cite{Chouk2019} in the case $n=\infty$ is to consider suitable even
and odd extensions of functions on $\Lambda_n$ to periodic functions on
$\Theta_n$, and then to work with the usual tools from periodic paracontrolled
distributions on $\Theta_n$.  So for $u\colon\Lambda_n \to \RR $ we define
\begin{gather*} 
  \Pi_o u\colon \Theta_{n} \to \RR, \ \ \ \Pi_o u(\mathfrak{q} \circ x)= \prod
  \mathfrak{q} \cdot u(x), \qquad \Pi_e u \colon \Theta_n \to \RR, \ \ \ \Pi_e
  u(\mathfrak{q} \circ x) = u(x), 
\end{gather*}
where $x \in \Lambda_{n}, \mathfrak{q} \in \left\{{-} 1, 1\right\}^{d}$ and we
define the product \( \mathfrak{q} \circ x =(\mathfrak{q}_i
x_i)_{i = 1, \ldots, d}\) as well as \(\prod \mathfrak{q} =
  \prod_{i = 1}^d \mathfrak{q}_i\). 
  We shall work with the
  discrete periodic Fourier transform, defined for $\varphi \colon \Theta_n \to
  \RR$ by 
  \[ \mF_{\Theta_n} \varphi (k) = \frac{1}{n^d} \sum_{x \in
\Theta_n}  \varphi (x) e^{{-} 2 \pi \iota \langle x, k \rangle}, \ \ k \in
\Xi_n.\] 
As in \cite{Chouk2019} we have a periodic, a Dirichlet and a Neumann basis,
which we will denote with: \(\{\mathfrak{e}_{k}\}_{k \in \Xi_{n}},
\{\mathfrak{d}_k\}_{k \in \Xi_{n}^{+} \setminus \partial \Xi_{n}^{+}}\) and ,
\(\{\mathfrak{n}_{k}\}_{k \in \Xi_{n}^{+}}\) respectively. Here
\(\mathfrak{e}_{k}\) is the classical Fourier basis:
\[ \mathfrak{e}_k(x) =
  \frac{e^{2\pi \iota \langle x, k \rangle}}{N^{\frac{d}{2}}}, \ \ \ \text{ so
  that } \ \ \mF_{\Theta_n} \varphi (k) = N^{\frac{d}{2}}\langle \varphi,
\mathfrak{e}_k \rangle, \ \ k \in \Xi_n,
\]
the Dirichlet and Neumann bases consists of sine and cosine functions
respectively:
\[
  \mathfrak{d}_k(x) = \frac{1}{N^{\frac{d}{2} }} \prod_{i=1}^d 2 \sin(2 \pi k_i
  x_i), \ k \in A_{\mf{d}}^{n} \ \  \mathfrak{n}_k(x) = \frac{1}{N^{\frac{d}{2}
  }} \prod_{i=1}^d 2^{1 - 1_{\{k_i = 0\}}/2} \cos(2\pi k_i x_i), \ k \in
  A_{\mf{n}}^{n}.
\]
To the previous explicit expressions we will prefer the following alternative
characterization (with \(\nu_k =  2^{{-} \# \{i : k_i =0\}/2}\)): 
\begin{align*}
    \Pi_o{\mathfrak{d}}_k= \iota^{d} \sum_{\mathfrak{q} \in  \{ {-} 1,
    1\}^{d}} \prod \mathfrak{q} \cdot \mathfrak{e}_{\mathfrak{q} \circ k},
    \ \ \ \forall k \in A_{\mf{d}}^{n}, \ \ \ \ \Pi_e
    {\mathfrak{n}}_k = \nu_k \sum_{\mathfrak{q} \in \{ {-} 1, 1\}^{d}}
    \mathfrak{e}_{\mathfrak{q} \circ k}, \ \ \ \forall k \in
    A_{\mf{n}}^{n}.
\end{align*}
For $\mathfrak{l}\in \{ \mathfrak{d}, \mathfrak{n}\}$ and $n < \infty$ write \(
\mS^\prime_{\mathfrak{l}} (\Lambda_n) = \mathrm{span} \{\mathfrak{l}_k\}_{k \in
A_{\mf{l}}^{n}}\) for the space of discrete distributions. For $n = \infty$ we
define distributions via formal Fourier series: 
\[ 
  \mS^\prime_{\mathfrak{l}} ([0,L]^d) = \Big\{ \sum_{k \in A_{\mf{l}}^{\infty}}
  \alpha_k \mathfrak{l}_k \ : \ |\alpha_k| \le C(1{+}|\kappa|^\gamma), \text{
for some } C,\gamma \ge 0 \Big\}.
\]  
Now let us introduce Littlewood-Paley theory on the lattice, in order to
control products between distributions on \(\Lambda_{n}\) uniformly in $n$.
Consider an \emph{even} function \(\sigma \colon \Xi_{n}
\to \mathbb{R}.\) Then for \(\varphi \in \mS^\prime_{\mathfrak{l}} (\Lambda_n)\)
we define the \emph{Fourier multiplier}: \[ \sigma(D) \varphi = \sum_{k \in
  A_{\mf{l}}^{n}} \sigma(k) \langle
\varphi, \mathfrak{l}_{k} \rangle \mathfrak{l}_k. \]
Upon extending \(\varphi\) in an even or odd fashion we recover the classical
notion of Fourier multiplier (namely on a torus: \(\sigma(D) \varphi =
\mF_{\Theta_n}^{{-}1} (\sigma \mF_{\Theta_n} \varphi)\)), since \(
  \Pi_o\big(\sigma(D) \varphi\big) = \sigma(D) \Pi_o\varphi\) and verbatim for
  \(\Pi_{e}\).  Fix then a dyadic partition of the unity \(\{\varrho_{j} \}_{j
  \geq {-} 1} \) as in \cite[Definition 2.4]{MartinPerkowski2017} and
  let \(j_{n} = \min \{j \geq {-} 1 \colon \supp(\varrho_{j}) \not\subseteq (
  {-} \frac{N n}{2} , \frac{N n}{2} )^{d}\}\) ( \(j_{n} = \infty\) if \(n =
  \infty\)), so as to define for \(\varphi \in \mS^\prime_{\mathfrak{l}}
  (\Lambda_n)\): 
\[ 
  \Delta^{n}_{j} \varphi = \varrho_{j} (D) \varphi \text{ for } j < j_{n},
  \qquad \Delta^{n}_{j_{n}} \varphi= \Big(1 - \! \! \sum_{{-} 1 \leq j < j_{n}}
  \varrho_{j}(D) \Big) \varphi.
\]
This allows us to define the \emph{paraproduct} and the \emph{resonant product}
of two distributions respectively (for \(n = \infty\) the latter is a-piori
ill-posed):
\[
  \varphi \para \psi = \sum_{{-} 1 \leq j \leq j_{n}} \sum_{{-} 1 \leq i \leq j {-} 1}
  \Delta_{i}^{n} \varphi \Delta_{j}^{n} \psi, \qquad \varphi \reso \psi = \sum_{
  |i {-} j| \leq 1 } \Delta_{i}^{n} \varphi \Delta_{j}^{n} \psi.
\]
In view of the previous calculations this is coherent with the 
definition on the lattice in \cite{MartinPerkowski2017}, in the sense that:
\[
	\Pi_o\big(\Delta_{j}^{n}\varphi\big) = \Delta_{j}^{n} \Pi_o\varphi,
	\qquad \Pi_e\big({\Delta_{j}^{n}\varphi}\big) = \Delta_{j}^{n}
	\Pi_e{\varphi}, \ \ {-}1 \leq j \leq j_{n}.
\]
We then define Dirichlet and Neumann Besov spaces via the following norms:
\begin{align*} 
  \| u\|_{B^{\mathfrak{d}, \alpha}_{p, q}(\Lambda_n)} =
  \|\Pi_o{u}\|_{B^{\alpha}_{p, q}(\Theta_n)} = \| (2^{\alpha j } \|\Delta_j
  \Pi_o{u} \|_{L^p(\Theta_n)})_j \|_{\ell^q(\le j_n)} \qquad u \in
  \mS^{\prime}_{\mf{d}}(\Lambda_{n})
\end{align*}
and similarly for \(\mf{n}\) upon replacing \(\Pi_{o}\) with \(\Pi_{e}\). For
brevity we write $\mC^{\alpha}_{\mathfrak{l}, p}(\Lambda_n) = B^{\mathfrak{l},
\alpha}_{p,\infty}(\Lambda_n)$ and \(\mC^{\alpha}_{\mathfrak{l}}(\Lambda_n) =
B^{\mathfrak{l}, \alpha}_{\infty,\infty}(\Lambda_n)\) for $\mathfrak{l} \in \{
\mathfrak{n}, \mathfrak{d}\}$. We also write
$\|u\|_{L^p_{\mathfrak{d}}(\Lambda_n)} = \|\Pi_o u \|_{L^p(\Theta_n)}$ and
$\|u\|_{L^p_{\mathfrak{n}}(\Lambda_n)} = \|\Pi_e u \|_{L^p(\Theta_n)}$. Having
introduced Besov spaces we can define the spaces of time-dependent functions \(\mM^{\gamma}
\mC^{\alpha}_{\mf{l}, p}\) and \(\mL^{\gamma, \alpha}_{\mf{l}, \alpha}\) for
\(\mf{l} \in \{\mf{d}, \mf{n}\}\) as in
\cite[Definition 3.8]{MartinPerkowski2017} without the necessity of taking into
account weights. The above spaces allow for a detailed analysis of products of
distributions. The last ingredient in this sense are the following identities:
\begin{equation}\label{eqn:odd_even_extension_if_products} 
  \Pi_e({\varphi\psi}) = \Pi_e{\varphi} \Pi_e{\psi}, \qquad \Pi_o({\varphi\psi})
  = \Pi_o{\varphi} \Pi_e{\psi}.
\end{equation} 
To solve equations with Dirichlet boundary conditions, introduce the following
Laplace operators for $n < \infty$ (let \(\varphi\colon  \Lambda_{n} \to
\mathbb{R}\), \(\psi  \colon \Theta_{n} \to \RR\)):
\[
  \Delta^{n} \psi (x) = n^{2} \!\!\! \sum_{|x {-} y| =  n^{{-} 1}} \psi(y) {-}
  \psi(x), \qquad \Delta^{n}_{\mathfrak{d}} \varphi = (\Delta^{n} \Pi_{o}
  \varphi) \vert_{\Lambda_{n}}, \qquad \Delta_{\mf{n}}^{n} \varphi = (\Delta^{n}
  \Pi_{e} \varphi) \vert_{\Lambda_{n}}.
\] 
The latter two operators are defined only on the domain \(\mathrm{Dom}(
\Delta^{n}_{\mf{l}}) = \mS^{\prime}_{\mf{l}}(\Lambda_{n})\).
A direct computation (cf. \cite[Section~3]{MartinPerkowski2017}) then shows that
we can represent both Laplacians as Fourier multipliers:
\[ \Delta^{n}_{\mathfrak{l}} \mathfrak{l}_{k} = l^{n}(k) \mathfrak{l}_{k},
  \qquad l^{n}(k) = \sum_{j = 1}^{d}2 n^2\big(\cos{(2 \pi k_{j} /n)} {-} 1\big),
  \text{ for } \mf{l} \in \{\mf{d}, \mf{n}\}.
\] 
Note that \(l^{n}\) is an even function in \(k\), so all the remarks from the
previous discussion apply. For $n = \infty$ we use the classical Laplacian: the boundary condition is
encoded in the domain. We write $\Delta_{\mathfrak{l}}$ for the Laplacian on
$\mS^\prime_{\mathfrak{l}} ([0,L]^d)$.  
We introduce Dirichlet and Neumann extension operators as follows: \[
\mathcal{E}^{n}_{\mathfrak{d}} u = \mathcal{E}^{n} (\Pi_o{u}) \big\vert
_{[0,L]^{d}} , \qquad \mathcal{E}^{n}_{\mathfrak{n}} u = \mathcal{E}^{n}(\Pi_e{u}) \big\vert_{[0,L]^{d}}, 
\qquad \text{ for } n < \infty,\]
where the periodic extension operator \(\mE^{n}\) is defined as in
\cite[Lemma 2.24]{MartinPerkowski2017}.
These functions are well-defined since for fixed \(n\) the extension $\mathcal{E}_n(\cdot)$ is a
smooth function. Moreover a simple calculation shows that
\begin{equation}\label{eqn:dirichlet_extension_identity}
	\Pi_o(\mathcal{E}^n_{\mathfrak{d}}u) = \mathcal{E}^{n}(\Pi_o{u}), \qquad
	\Pi_e({\mathcal{E}^{n}_{\mathfrak{n}}u}) = \mathcal{E}^{n} (\Pi_e{u}).
\end{equation}

\subsection{Solving the Equation}

We now study Equation \eqref{eqn:pam_dirichlet_continuous} on a box. We start
with the crucial probabilistic assumptions on the noise (cf.
\cite[Asumption 2.1]{PerkowskiRosati2019}). 

\begin{assumption}\label{assu:noise}
  We assume that for
  every \(n \in \NN\),  $\{\xi^n(x)\}_{x \in \ZZ^d_n}$ is a set of i.i.d random
  variables which satisfy: 
  \begin{equation}\label{eqn:distr_xi} n^{-d/2}
    \xi^n(x) \sim \Phi,
  \end{equation} for a probability distribution $\Phi$
  on $\RR$ with finite moments of every order and which satisfies \[\EE[\Phi] = 0, \ \
  \EE[\Phi^2] = 1.\]
\end{assumption}

These probabilistic assumptions guarantee certain analytical properties which we
highlight in the next lemma. In the remainder of this work we shift $\Lambda_n$ to be centered around the
origin and identify it with a subset of $[-L/2,L/2]^d$. This is convenient
because later we want to interpret processes on $\Lambda_n$ as ``restrictions''
of a processes on $\znd$ to (large) boxes centered around the origin. By this we
mean that for \(L \in 2 \NN\) we define \(\Lambda_{n} = \{x \in \znd
\ \colon \ x \in [{-} L/2, L/2]^d \}\) 
In the following let \(\chi\) be the same cut-off function as in
\cite[Section 5.1]{MartinPerkowski2017} and in dimension \(d = 2\) define the
renormalization constant (note that this constant does not depend on
\(L\)):
\begin{equation}\label{eqn:renormalization_constant}
  \kappa_{n} =  \int_{\TT^{2}_{n}} \ud k \ \frac{\chi(k)}{l^{n}(k)} \sim
  log(n).
\end{equation}

\begin{lemma}\label{lem:renormalisation-neumann}
  Let \( \overline{\xi}^{n}\) be a sequence of random
  variable satisfying
  Assumption \ref{assu:noise}. There exists a probability space \( (
  \Omega, \mF, \PP)\) supporting random variables
  \(\xi^{n}, \xi\) such that \(\xi\) is space white noise on
  \(\RR^{d}\) and \(\xi^{n}= \overline{\xi}^{n}\) in distribution for every
  \(n \in \NN\).
  
  Such random variables satisfy the following requirements.
  Let $X^n_{\mf{n}}$ be the (random) solution to the equation ${-}\Delta^n_{\mathfrak{n}} X^n_{\mathfrak{n}} =
  \chi(D)\xi^n$. For every \(\omega \in
  \Omega\) and \(\alpha\) satisfying
  \begin{equation}\label{eqn:alpha_bounds_dirichlet}
	\alpha \in (1, \tfrac32) \text{ in } d = 1, \qquad \alpha \in (\tfrac23,
	1) \text{ in } d=2,
  \end{equation}
  the following holds for all \(L \in 2\NN\):
  \begin{itemize}
    \item[(i)] $\xi(\omega) \in
      \mC^{\alpha{-}2}_{\mathfrak{n}}([-L/2,L/2]^d)$ as well as \( \sup_n
      \|\xi^n(\omega)\|_{\mC_{\mathfrak{n}}^{\alpha{-}2}(\Lambda_n)}< {+}\infty\) and
      \(\mE^n_{\mathfrak{n}} \xi^n(\omega) \to \xi(\omega)\) in
      \(\mC_{\mathfrak{n}}^{\alpha{-}2}([-L/2,L/2]^d)\).

    \item[(ii)]  For any $\ve >0$ (with \( ( \cdot)_{+} = \max \{ 0, \cdot\}\)):
	\[
	  \sup_n \|n^{-d/2} \xi^n_{+}(\omega)
	  \|_{\mC_{\mathfrak{n}}^{-\ve}(\Lambda_n)} + \sup_n \|n^{-d/2}
	  |\xi^n(\omega)| \|_{\mC_{\mathfrak{n}}^{-\ve}(\Lambda_n)} + \sup_{n}
	  \| n^{ {-} d/2} \xi^n_{{+}}(\omega)
	  \|_{L^2_{\mathfrak{n}}(\Lambda_{n})} < {+}\infty.
	\]
	Moreover, there exists a \(\nu \geq 0 \) such that \(
	\mathcal{E}^n_{\mathfrak{n}} n^{{-} d/2} \xi^n_+(\omega) \to \nu,\)
	\(\mathcal{E}^n_{\mathfrak{n}}n^{{-} d/2} | \xi^n(\omega)| \to 2 \nu\) in \(
	\mathcal{C}^{ {-} \varepsilon}_{\mathfrak{n}}(\Lambda_{n})\). 

      \item[(iii)] If \(d = 2\) there exists a sequence \(c_{n}(\omega) \in
	\mathbb{R}\) such that \(n^{{-} d/2} c_{n} \to 0\) and distributions
	\(X_{\mathfrak{n}}(\omega), X_{\mathfrak{n}} \diamond \xi(\omega)\) in
	\(\mathcal{C}^{\alpha}_{\mathfrak{n}}([{-} L/2, L/2]^d)\) and
	\(\mathcal{C}^{2 \alpha {-} 2}_{\mathfrak{n}}([ {-} L/2, L/2]^d)\)
	respectively, such that: 
	\[ 
	  \sup_n \| X^n_{\mathfrak{n}}(\omega)
          \|_{\mC^{\alpha}_{\mathfrak{n}}(\Lambda_{n})} + \sup_n \|
          (X^n_{\mathfrak{n}} \reso \xi^n)(\omega) {-}c_n(\omega)
          \|_{\mC^{2\alpha{-}2}_{\mathfrak{n}}(\Lambda_{n})} <{+} \infty 
	\] 
	and $\mE^n_{\mathfrak{n}} X^n_{\mathfrak{n}}(\omega) \to
	X_{\mathfrak{n}}(\omega)$ in
	$\mC^{\alpha}_{\mathfrak{n}}([{-} L/2, L/2]^d)$,
	$\mE^n_{\mathfrak{n}}\big( (X^n_{\mathfrak{n}}  \reso\xi^n)(\omega)
	{-}c_n(\omega) \big)
	\to X_{\mathfrak{n}}\diamond \xi(\omega)$ in $\mC^{2\alpha{-}2}_{\mathfrak{n}}([
	{-} L/2, L/2]^d)$.
  \end{itemize} 
  Finally, \(\PP( c_n (\omega) = \kappa_{n}, \forall n \in \NN \text{ and }
  \nu = \EE \Phi_{+} ) = 1\) and for
  all \(\omega \in \Omega\), \(\xi^{n}(\omega)\) is a deterministic environment
  satisfying \cite[Assumption 2.3]{PerkowskiRosati2019}, with the same renormalization constant
  \(c_n(\omega)\) as above if \(d =2\).

\end{lemma}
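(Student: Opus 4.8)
The plan is to transfer the entire statement to the periodic torus $\Theta_n$ through the even extension $\Pi_e$ and then to run the renormalisation estimates that are already available in the periodic and full-space settings. The starting observation is that, by the very definition of the Neumann norms and by the identities relating $\Pi_e$ with products, with even Fourier multipliers, and with the Laplacian, every object in (i)--(iii) is the $\Pi_e$-image of a periodic object on $\Theta_n$. Indeed $\|\cdot\|_{\mC^{\alpha-2}_{\mf{n}}(\Lambda_n)}=\|\Pi_e\cdot\|_{\mC^{\alpha-2}(\Theta_n)}$ by definition; applying $\Pi_e$ to ${-}\Delta^n_{\mf{n}}X^n_{\mf{n}}=\chi(D)\xi^n$ and using that $\Pi_e$ commutes with $\Delta^n$ and with the even multiplier $\chi(D)$ shows that $\Pi_e X^n_{\mf{n}}$ solves the periodic equation ${-}\Delta^n\,\Pi_e X^n_{\mf{n}}=\chi(D)\,\Pi_e\xi^n$ on $\Theta_n$; and the product identity \eqref{eqn:odd_even_extension_if_products} together with the commutation of $\Pi_e$ with the blocks $\Delta^n_j$ gives $\Pi_e(X^n_{\mf{n}}\reso\xi^n)=\Pi_e X^n_{\mf{n}}\reso\Pi_e\xi^n$, and likewise for $\xi^n_+$ and $|\xi^n|$. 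It therefore suffices to prove uniform bounds and convergence for the corresponding periodic quantities built from the even-extended noise $\Pi_e\xi^n$ on $\Theta_n$.

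The coupling I would obtain first at the full-space level, following \cite[Section 5]{MartinPerkowski2017} and \cite{PerkowskiRosati2019}: an invariance principle for the rescaled lattice noise gives convergence in law of the enhanced discrete noise to the enhanced space white noise $\xi$ on $\RR^d$, a single noise whose restriction to each box $[{-}L/2,L/2]^d$ furnishes the limit for every $L\in2\NN$ simultaneously. Since the relevant Besov spaces, taken with a slightly smaller regularity exponent to produce a compact embedding, are Polish, the uniform-in-$n$ moment bounds yield tightness, and the limit is identified as the enhanced white noise through convergence of the renormalised second-order data. A Skorokhod representation then produces one probability space carrying $\xi^n\overset{d}{=}\overline{\xi}^n$ and a white noise $\xi$ along which all components converge almost surely. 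Restricting $\Omega$ to the full-measure set on which convergence holds (and redefining the variables off it) upgrades the almost sure statements to statements valid for every $\omega$; the bounds $\sup_n\|\cdot\|<{+}\infty$ then follow for each such $\omega$ because an almost surely convergent sequence is almost surely bounded, while the discrete Neumann norms and the norms of their extensions are comparable uniformly in $n$ by \cite[Lemma 2.24]{MartinPerkowski2017}.

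The genuinely new, and hardest, point is that $\Pi_e\xi^n$ is \emph{not} the periodic i.i.d.\ field treated in those references: its law carries the exact correlations forced by reflection, $\Pi_e\xi^n(\mathfrak{q}\circ x)=\xi^n(x)$. I would show these reflection correlations are harmless. In the variance of a Littlewood--Paley block $\Delta^n_j\Pi_e\xi^n(x)$ they contribute, besides the usual diagonal term, only the pairings of $x$ with its reflected images $\mathfrak{q}\circ x$; since the block kernel has width $2^{-j}$, such a pairing is non-negligible only when $x$ lies within distance $\sim 2^{-j}$ of a reflection hyperplane, where it is bounded by a fixed multiple of the diagonal term. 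Hence the reflected terms change the Besov estimates by at most a bounded factor and, crucially, do not alter the leading logarithmic divergence of $\EE[X^n_{\mf{n}}\reso\xi^n]$: the diagonal part reproduces $\int_{\TT^2_n}\chi(k)/l^n(k)\,\ud k=\kappa_n\sim\log(n)$, while the reflected part stays bounded and is absorbed into the convergent $\mC^{2\alpha-2}_{\mf{n}}$ limit. This is what permits the deterministic choice $c_n=\kappa_n$ in $d=2$, with $n^{-d/2}\kappa_n\to 0$. I expect this reflection-correlation analysis to be the main obstacle, precisely because it is the place where passing to the Neumann problem by even extension could, a priori, create extra boundary divergences in the renormalisation.

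Finally, the identifications in the last sentence are read off from the construction. The constant $\nu$ comes from a law of large numbers for the i.i.d.\ field $n^{-d/2}\xi^n_+$: its mean even-extends to the constant $\EE\Phi_+$, whereas its centred part has variance vanishing in every negative Besov norm, so that $\mathcal{E}^n_{\mf{n}}n^{-d/2}\xi^n_+\to\nu=\EE\Phi_+$; the companion limit $2\nu$ follows since $\EE|\Phi|=2\EE\Phi_+$ by $\EE\Phi=0$, and the $L^2_{\mf{n}}$ bound is the same law of large numbers applied to $\Phi_+^2$. The almost sure identity $c_n=\kappa_n$ is the content of the previous paragraph, and the compatibility of $\xi^n(\omega)$ with \cite[Assumption 2.3]{PerkowskiRosati2019} is a direct comparison of definitions, so that $\PP(c_n(\omega)=\kappa_n,\ \forall n,\ \nu=\EE\Phi_+)=1$.
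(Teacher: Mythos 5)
Your overall skeleton coincides with the paper's proof: transfer to $\Theta_n$ via $\Pi_e$ using \eqref{eqn:odd_even_extension_if_products} and the commutation of $\Pi_e$ with Fourier multipliers, reduce the pathwise statements to moment bounds plus convergence in distribution and conclude with Skorohod representation (setting everything to zero on a null set, and extending to all $L,\alpha,\ve$ by a product-topology argument), treat (ii) by a law of large numbers as in \cite[Appendix B]{PerkowskiRosati2019}, and concentrate the work on $X^n_{\mf{n}} \reso \xi^n$. The gap is precisely in the step you yourself identify as the hardest one. Your argument for it is a locality heuristic about the \emph{variance of blocks of $\Pi_e\xi^n$}, from which you infer (``hence'') that the reflected contributions to $\EE[\Pi_e(X^n_{\mf{n}}\reso\xi^n)]$ ``stay bounded''. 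This inference is a non sequitur, and the conclusion, read pointwise, is false. Expanding $\EE[\Delta_j\Pi_e(X^n_{\mf{n}}\reso\xi^n)](x)$ in the Neumann basis, the expectation splits according to the relative signs $\mf{q}_1,\mf{q}_2$ of the paired frequencies. Only the pairs with $\mf{q}_1=-\mf{q}_2$ are independent of $x$; they produce the divergent constant $\int_{\Xi_n}\chi(k)/l^n(k)\,\ud k$. The remaining pairs are genuinely $x$-dependent and are \emph{not} pointwise bounded: for instance the pairs differing in exactly one coordinate contribute, after summing over $j$, a term of the form $\int_{\Xi_n^+}\ud k\,\nu_k^2\,e^{2\pi\iota 2x_2k_2}\chi(k)/l^n(k)$, which on the reflection hyperplane $x_2=0$ equals a positive multiple of $\int_{\Xi_n}\chi(k)/l^n(k)\,\ud k\sim\log n$. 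So the ``reflected part'' of the renormalisation is a boundary-concentrated object that diverges logarithmically on the reflection hyperplanes; it can only be controlled in a distributional sense, and that distributional estimate is exactly what your proposal does not supply.

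What is actually needed (and what the paper's Step 2 does) is the following. After bounding the off-diagonal chaos and the fluctuation of the diagonal with the discrete BDG inequality of \cite[Proposition 4.3]{ChoukGairingPerkowski2017}, one splits the deterministic diagonal term by cases in $(\mf{q}_1,\mf{q}_2)$: for $\mf{q}_1=\mf{q}_2$ the $j$-th block is $O(2^{j(d-2)})$; for the mixed case one proves the block is $O(2^{j\ve})$ for every $\ve>0$, via the splitting $\chi(k)/|l^n(k)|\lesssim |k_1|^{-2\theta}|k_2|^{-2(1-\theta)}$ with $\theta\in(1/2,1)$ close to $1/2$. It is this $2^{j\ve}$ growth --- harmless only because $2\alpha-2<0$ --- that rigorously replaces your ``bounded by a fixed multiple of the diagonal'' claim; a uniform-in-$j$ bound is not available. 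Moreover, identifying the subtraction with $\kappa_n$, which is an integral over $\TT^2_n$ and not the lattice sum $\int_{\Xi_n}\chi(k)/l^n(k)\,\ud k$ produced by the $\mf{q}_1=-\mf{q}_2$ pairs, requires the comparison with the auxiliary constant $\bar\kappa_n$ (Riemann-sum estimate using $|l^n(\vartheta)|\gtrsim|\vartheta|^2$ and $|\nabla l^n(\vartheta)|\lesssim|\vartheta|$) showing $\kappa_n-\bar\kappa_n$ is bounded and convergent; your proposal conflates the two quantities. Without these estimates the assertions $c_n=\kappa_n$, $n^{-d/2}c_n\to 0$, and the uniform $\mC^{2\alpha-2}_{\mf{n}}$ bound in (iii) are unsupported, since they are exactly where the ``extra boundary divergences'' you flag are ruled out.
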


The proof of this lemma is postponed to the next subsection. We pass to the main
analytic statement of this work.

\begin{theorem}\label{thm:convergence_PAM_Dirichlet} 
  Consider \(\xi^{n}\) as in Lemma \ref{lem:renormalisation-neumann} and \(\alpha\) as
  in \eqref{eqn:alpha_bounds_dirichlet}, any \(T >0\), $p \in [1, {+}
  \infty], \gamma_{0} \in [0, 1)$ and $\vartheta, \zeta, \alpha_0$ satisfying:
  \begin{equation}\label{eqn:condition_parameters_PAM_Dirichlet}
    \vartheta \in \begin{cases} (2{-}\alpha, \alpha), & d=1, \\ (2{-}2\alpha,
    \alpha), & d = 2, \end{cases} \qquad \zeta >(\vartheta{-}2)\vee ({-}\alpha),
    \qquad \alpha_0 >(\vartheta{-}2)\vee ({-}\alpha),
  \end{equation}
  and let $w^n_0 \in \mC^{\zeta}_{\mathfrak{d}, p}(\Lambda_n)$ and $f^n \in
  \mM^{\gamma_0}\mC^{\alpha_0}_{\mathfrak{d}, p}(\Lambda_n)$ be such that 
  \[	
    \mE^n w^n_0 \to w_0 \text{ in } \mC^{\zeta}_{\mathfrak{d}, p}([-L/2,L/2]^d),
    \qquad \mE^n f^n \to f \text{ in }
    \mM^{\gamma_0}\mC^{\alpha_0}_{\mathfrak{d}, p}([-L/2,L/2]^d).  
  \]
  Let $w^n \colon [0,T] \times \Lambda_n \to \RR$ be the unique solution to the finite-dimensional linear ODE:
  \begin{equation}\label{eqn:pam-rigorous-discrete_dirichlet} 
    \partial_t w^n = (\Delta^n_{\mathfrak{d}} + \xi^n {-} c_n 1_{\{d =2\}}) w^n
    + f^n, \ \ w^n(0) = w^n_0, \ \ w(t,x) = 0 \ \ \forall (t,x) \in (0,T]
    \times \partial \Lambda_n.
  \end{equation}
  There exist a unique (paracontrolled in the sense of \cite{Chouk2019} or
    \cite{MartinPerkowski2017} in $d=2$) solution $w$ to the equation
  \begin{equation}\label{eqn:pam-rigorous-continuous_dirichlet}
    \partial_t w = (\Delta_{\mathfrak{d}}  + \xi ) w + f, \ \ w(0)
    = w_0, \ \  w(t,x) = 0  \ \ \forall (t,x) \in (0,T] \times \partial
    [-L/2, L/2]^d,
  \end{equation}
  and for all $\gamma > (\vartheta{-}\zeta)_+ / 2 \vee \gamma_0$ the sequence
  $w^n$ is uniformly bounded in $\mL_{\mathfrak{d}, p}^{\gamma,
  \vartheta}(\Lambda_n)$:
  \begin{align*}
    \sup_{n} \| w^{n} \|_{\mL^{\gamma, \vt}_{\mf{d}, p} (\Lambda_{n})} \lesssim
    \sup_{n} \| w^{n}_{0} \|_{ \mC^{\zeta}_{\mf{d}, p}( \Lambda_{n} )} + \sup_{n}\|
    f^{n} \|_{ \mM^{\gamma_{0}} \mC^{\alpha_{0}}_{\mf{d}, p}( \Lambda_{n}) },
  \end{align*}
   where the proportionality constant depends on the time horizon \(T\) and the magnitude
   of the norms in Lemma \ref{lem:renormalisation-neumann}. Moreover,
   \[
     \mE^n w^n \to w \text{ in } \mL_{\mathfrak{d}, p}^{\gamma, \vartheta}([-L/2,L/2]^d).
   \]
\end{theorem}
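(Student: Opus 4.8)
The strategy is to reduce the Dirichlet problem on $\Lambda_n$ to a periodic problem on $\Theta_n$ via the odd extension $\Pi_o$, and then to invoke the paracontrolled solution theory and convergence results of \cite{MartinPerkowski2017} on the torus, transferring everything back through the extension identities established in Section \ref{sect:PAM_dirichlet_bc}. The two key algebraic facts that make this work are the commutation of the Laplacian and Littlewood--Paley blocks with $\Pi_o$, namely $\Pi_o(\Delta^n_{\mathfrak{d}}\varphi) = \Delta^n(\Pi_o\varphi)$ and $\Pi_o(\Delta^n_j\varphi) = \Delta^n_j(\Pi_o\varphi)$, together with the product identity $\Pi_o(\varphi\psi) = \Pi_o\varphi\,\Pi_e\psi$ from \eqref{eqn:odd_even_extension_if_products}. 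Applying $\Pi_o$ to \eqref{eqn:pam-rigorous-discrete_dirichlet} and using these identities, the discrete Dirichlet equation becomes a \emph{periodic} discrete PAM on $\Theta_n$ driven by the even extension $\Pi_e\xi^n$ of the noise, with initial datum $\Pi_o w^n_0$ and forcing $\Pi_o f^n$. Crucially, by definition of the Dirichlet Besov norms, $\|w^n\|_{\mL^{\gamma,\vt}_{\mf{d},p}(\Lambda_n)} = \|\Pi_o w^n\|_{\mL^{\gamma,\vt}_p(\Theta_n)}$, so all the quantitative estimates reduce to periodic ones.

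\textbf{Main steps.} First I would record the reduction above precisely: verify that $\Pi_o w^n$ solves the periodic equation $\partial_t(\Pi_o w^n) = (\Delta^n + \Pi_e\xi^n - c_n 1_{\{d=2\}})(\Pi_o w^n) + \Pi_o f^n$, checking that the multiplicative term matches using $\Pi_o(\xi^n w^n) = (\Pi_e\xi^n)(\Pi_o w^n)$. Second, I would feed the enhanced noise data into the periodic solution theory. This is exactly where Lemma \ref{lem:renormalisation-neumann} enters: parts (i)--(iii) guarantee that $\Pi_e\xi^n$ (which carries the Neumann norms, hence the even-extension norms) is uniformly bounded in $\mC^{\alpha-2}$, converges to $\xi$, and in $d=2$ admits the renormalized resonant product $X^n_{\mf n}\reso\xi^n - c_n$ converging to $X_{\mf n}\diamond\xi$, with the \emph{same} renormalization constant $c_n=\kappa_n$ appearing in the equation. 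These are precisely the hypotheses of the a priori estimate and convergence theorem of \cite[Section 3 and Section 5]{MartinPerkowski2017} for the periodic discrete PAM. Applying that theorem yields both the uniform bound $\sup_n\|\Pi_o w^n\|_{\mL^{\gamma,\vt}_p(\Theta_n)} \lesssim \sup_n\|\Pi_o w^n_0\|_{\mC^\zeta_p} + \sup_n\|\Pi_o f^n\|_{\mM^{\gamma_0}\mC^{\alpha_0}_p}$ and the convergence $\mE^n(\Pi_o w^n) \to \bar w$ in the periodic space, for a limiting paracontrolled solution $\bar w$ on $\Theta_N$. Third, I would transfer back: defining $w = \bar w\vert_{[-L/2,L/2]^d}$ and using \eqref{eqn:dirichlet_extension_identity}, $\Pi_o(\mE^n_{\mf d}w^n) = \mE^n(\Pi_o w^n)$, one obtains $\mE^n_{\mf d}w^n \to w$ in $\mL^{\gamma,\vt}_{\mf d,p}([-L/2,L/2]^d)$. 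Finally, uniqueness and the paracontrolled-solution property of $w$ follow because the odd extension $\bar w = \Pi_o w$ inherits oddness in the limit (the odd-symmetry class is closed under the limit), so $\bar w$ lies in the odd subspace and corresponds to a genuine Dirichlet solution; one checks $w$ satisfies \eqref{eqn:pam-rigorous-continuous_dirichlet} with the continuous Dirichlet Laplacian $\Delta_{\mf d}$, again via $\Delta_{\mf d} = (\Delta\,\Pi_o(\cdot))\vert_{[0,L]^d}$.

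\textbf{Main obstacle.} The genuinely delicate point is \emph{parameter bookkeeping and the oddness of the noise enhancement}. The periodic results of \cite{MartinPerkowski2017} are stated for generic enhanced noise, but here the driving data is the even extension $\Pi_e\xi^n$ and the paracontrolled ansatz for $w$ is built on $X_{\mf d}$ (odd), so I must confirm that the resonant product $X^n_{\mf n}\reso\xi^n$ that actually appears when expanding $\Pi_o w^n$ on the torus is the Neumann one controlled in Lemma \ref{lem:renormalisation-neumann}(iii), rather than a Dirichlet one; this is the reason the lemma is phrased in terms of $\mf n$ even though the equation is Dirichlet, and it comes down to the second identity in \eqref{eqn:odd_even_extension_if_products}. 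I would need to check that the product structure $\varphi\para\psi$, $\varphi\reso\psi$ appearing in the paracontrolled fixed point respects the odd/even grading consistently, so that the single renormalization constant $c_n$ suffices and no boundary-layer terms are generated. The verification that $\vartheta,\zeta,\alpha_0$ as constrained in \eqref{eqn:condition_parameters_PAM_Dirichlet} meet the hypotheses of the periodic theorem (in particular that $\vartheta<\alpha$ controls the paracontrolled remainder and $\zeta,\alpha_0 > (\vartheta-2)\vee(-\alpha)$ the initial/forcing regularity) is then routine but must be done carefully, especially the $d=2$ case where the regularities are borderline and the renormalization is active.
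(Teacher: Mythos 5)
Your proposal is correct and follows essentially the same route as the paper: the paper's proof likewise applies $\Pi_o$ to reduce \eqref{eqn:pam-rigorous-discrete_dirichlet} to the periodic equation $\partial_t \tilde w^n = \Delta^n \tilde w^n + \Pi_e(\xi^n_e)\tilde w^n + \Pi_o f$ on $\Theta_n$, solves it via Schauder estimates and (in $d=2$) the paracontrolled theory of \cite{MartinPerkowski2017} using the bounds of Lemma \ref{lem:renormalisation-neumann}, and transfers the convergence back through \eqref{eqn:dirichlet_extension_identity}. Your additional observations (that the relevant resonant product is the Neumann one $X^n_{\mf n}\reso\xi^n$, and that the Dirichlet norms equal the periodic norms of the odd extension) are exactly the points the paper's identities \eqref{eqn:odd_even_extension_if_products} are designed to settle.
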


\begin{proof}
  Note that in view of \eqref{eqn:odd_even_extension_if_products} solving
  Equation \eqref{eqn:pam-rigorous-discrete_dirichlet} (resp.
  \eqref{eqn:pam-rigorous-continuous_dirichlet}) is equivalent to solving on the
  discrete (resp. continuous) torus \( \Theta_{n} \) the equation:
  \[ 
    \partial_{t} \tilde{w}^n  = \Delta^n \tilde{w}^n {+} \Pi_e(\xi^n_{e})
    \tilde{w}^n {+} \Pi_o f, \qquad \tilde{w}^n(0) = \Pi_o w_0,   
  \] 
  and then restricting the solution to the cube \(\Lambda_{n}\), i.e. $w^n =
  \tilde{w}^n\vert_{\Lambda_n}$, and $\tilde{w}^n = \Pi_o w^n$. Via the bounds
  in Lemma \ref{lem:renormalisation-neumann} this equation can be solved for all
  \(\omega \in \Omega^{p}\) via Schauder estimates and (in dimension \(d =2\))
  paracontrolled theory following the arguments of \cite{MartinPerkowski2017}
  (without considering weights). From the arguments of the same article and
  Equation \eqref{eqn:dirichlet_extension_identity} we can also deduce the
  convergence of the extensions. Note that the solution theories in
  \cite{Chouk2019} and \cite{MartinPerkowski2017} coincide, although the latter
  concentrates on the construction of the Hamiltonian rather than the solutions
  to the parabolic equation (cf. \cite[Proposition 3.1]{PerkowskiRosati2019}).
\end{proof}

For every \(\omega \in \Omega\) it is also possible to define the Anderson
Hamiltonian $\mH_{\mathfrak{d}, L }^{\omega}$ with Dirichlet boundary
conditions. The domain and spectral decomposition for this operator are
rigorously constructed in \cite{Chouk2019} with the help of the resolvent
equation for \(d = 2\) and \cite{Lamarre2019}
via Dirichlet forms in \(d=1\). We write $\mH^{n,
\omega}_{\mathfrak{d}, L}, \mH_{\mathfrak{d}, L}^{\omega}$ for the operators
$\Delta^n_{\mathfrak{d}} {+} \xi^n(\omega){-}c_n(\omega)1_{\{d = 2\}}$ and
(formally) $\Delta_{\mathfrak{d}} {+} \xi(\omega){-}\infty 1_{\{d = 2\}}$
respectively. These operators generate semigroups $T^{n, \mathfrak{d}, L,
\omega}_t = e^{t\mH^{n, \omega}_{\mathfrak{d}, L}}$ and $T_t^{\mathfrak{d}, L,
\omega} = e^{t\mH_{\mathfrak{d}, L}^{\omega}}$.  In particular, the following
result is a simple consequence of the just quoted works. 

\begin{lemma}
  For a given null-set \(N_{0} \subseteq \Omega\) and all \(\omega \in N_0^{c}\),
  for all \(L \in \NN\) the operator $\mH^{\omega}_{\mathfrak d, L}$ has a
  discrete, bounded from above, spectrum and admits an eigenfunction
  $e_{\lambda(\omega, L)}$ associated to the largest eigenvalue $\lambda(\omega,
  L)$, such that $e_{\lambda(\omega, L)}(x) > 0$ for all $x \in (-\frac{L}{2},
  \frac{L}{2})^d$.
\end{lemma}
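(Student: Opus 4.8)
The plan is to combine the spectral theory of the cited works with a Perron--Frobenius argument carried out at the discrete level and then transported to the limit. I first dispose of the quantifier over $L$: for each $L \in \NN$ the spectral decomposition of $\mH^{\omega}_{\mf{d}, L}$ is constructed for $\omega$ outside a null set (via the resolvent in $d=2$, \cite{Chouk2019}, and via Dirichlet forms in $d=1$, \cite{Lamarre2019}), while the analytic estimates of Lemma \ref{lem:renormalisation-neumann} hold for every $\omega$; taking $N_0$ to be the countable union over $L \in \NN$ of these exceptional sets yields a single null set outside of which everything below is valid. Fix therefore $\omega \in N_0^c$ and $L \in \NN$. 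By the quoted works $\mH^{\omega}_{\mf{d}, L}$ is self-adjoint, bounded from above, and has compact resolvent, hence purely discrete spectrum accumulating only at $-\infty$; ordering the eigenvalues $\lambda_1 \geq \lambda_2 \geq \cdots$ we set $\lambda(\omega, L) = \lambda_1$, with associated eigenfunction $e_{\lambda(\omega, L)}$. Since $\alpha > 0$ in \eqref{eqn:alpha_bounds_dirichlet}, every eigenfunction lies in $\mC^{\alpha}_{\mf{d}}([-L/2, L/2]^d)$ and is in particular continuous, so its pointwise positivity is meaningful.

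It remains to prove strict positivity of $e_{\lambda(\omega, L)}$ on the open box, which I would obtain by a Perron--Frobenius/Krein--Rutman argument. At the discrete level the matter is elementary: restricted to the interior lattice points the operator $\mH^{n, \omega}_{\mf{d}, L} = \Delta^n_{\mf{d}} + \xi^n(\omega) - c_n(\omega) 1_{\{d=2\}}$ is represented by a finite symmetric matrix whose off-diagonal entries come from $\Delta^n$ and equal $n^2 \geq 0$ between nearest neighbours, while the diagonal collects $-2dn^2 + \xi^n(x) - c_n$, the Dirichlet condition merely removing the coupling to boundary sites. This is a symmetric, irreducible matrix with nonnegative off-diagonal entries (a Metzler matrix), the irreducibility being the connectedness of the nearest-neighbour graph on the interior of $\Lambda_n$. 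Classical Perron--Frobenius theory then gives that its top eigenvalue $\lambda_1^n$ is simple, that the semigroup $T^{n, \mf{d}, L, \omega}_t = e^{t \mH^{n,\omega}_{\mf{d}, L}}$ is positivity improving for $t > 0$, and that the associated eigenvector $e_1^n$ may be chosen strictly positive on the interior; I normalise it in $L^2_{\mf{d}}(\Lambda_n)$.

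I would then pass to the limit. Taking $f^n \equiv 0$ in Theorem \ref{thm:convergence_PAM_Dirichlet}, together with the resolvent convergence of \cite{Chouk2019}, gives convergence of the semigroups $T^{n,\mf{d},L,\omega}_t \to T^{\mf{d},L,\omega}_t$; since these are compact and self-adjoint, the top spectral projections converge, whence $\lambda_1^n \to \lambda(\omega, L)$ and $\mE^n_{\mf{d}} e_1^n \to e_{\lambda(\omega, L)}$ in $\mC^{\alpha}_{\mf{d}}([-L/2,L/2]^d)$ after fixing the sign. Because each $e_1^n \geq 0$ and the convergence is in a space embedding into continuous functions, the limit satisfies $e_{\lambda(\omega, L)} \geq 0$.

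The final, and genuinely delicate, step is to upgrade $\geq 0$ to $> 0$. Positivity preservation is stable under the limit above, so $T^{\mf{d},L,\omega}_t$ is positivity preserving; positivity \emph{improving}, however, is not a closed property and must be recovered separately. In $d = 1$ this is immediate from the Beurling--Deny structure of the Dirichlet form of \cite{Lamarre2019} together with the irreducibility furnished by the connectedness of $(-L/2, L/2)^d$. In $d = 2$, where the potential is defined only after renormalisation, I would argue that the positivity-preserving self-adjoint semigroup $T^{\mf{d},L,\omega}_t$ admits no nontrivial closed invariant ideal of the form $L^2(A)$ with $0 < |A| < L^d$ --- this irreducibility following from the local, connectivity-preserving nature of the Anderson Hamiltonian as a perturbation of $\Delta_{\mf{d}}$ --- so that positivity preserving is in fact positivity improving. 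Granting this, applying the eigenfunction relation $e_{\lambda(\omega, L)} = e^{-t \lambda(\omega, L)} T^{\mf{d},L,\omega}_t e_{\lambda(\omega, L)}$ to the nonnegative, nonzero $e_{\lambda(\omega, L)}$ forces $e_{\lambda(\omega, L)}(x) > 0$ for every $x \in (-L/2, L/2)^d$ and simultaneously yields simplicity of $\lambda(\omega, L)$. The main obstacle is precisely this positivity-improving upgrade in $d = 2$; the route I find most robust is to prove, via the discrete Feynman--Kac/branching representation underlying \cite{PerkowskiRosati2019}, that the limiting heat kernel $p_t(x, y)$ is strictly positive on the open box, which gives positivity improving directly and bypasses the abstract ergodicity argument.
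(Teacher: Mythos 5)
You have located the difficulty correctly, but your proposal does not close it: the step you yourself call the ``main obstacle'' --- upgrading $e_{\lambda(\omega,L)} \ge 0$ to $e_{\lambda(\omega,L)} > 0$ on the open box in $d=2$ --- is precisely the point where the paper invokes an external hard result that your argument lacks. Your discrete Perron--Frobenius argument is sound as far as it goes, but after passing to the limit it can only ever produce a nonnegative eigenfunction, and neither of your two sketched repairs is a proof. The ``no nontrivial invariant ideal'' claim \emph{is} the assertion of irreducibility of the continuum semigroup, and justifying it by ``the local, connectivity-preserving nature of the Anderson Hamiltonian as a perturbation of $\Delta_{\mf{d}}$'' is circular: in $d=2$ the operator exists only through paracontrolled calculus after subtracting the diverging constants $c_n$, its domain is an exotic paracontrolled space containing no smooth cut-offs, and the standard arguments that give irreducibility for Schr\"odinger operators with locally integrable potentials do not apply. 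The Feynman--Kac alternative fails for the same structural reason: strict positivity of the discrete kernels does not survive pointwise limits (which is exactly why $\ge$ does not upgrade to $>$ in the first place), and the $d=2$ continuum semigroup admits no classical Feynman--Kac formula from which kernel positivity could be read off. The paper fills this hole by quoting the strong maximum principle for singular SPDEs of \cite[Theorem~5.1, Remark~5.2]{Cannizzaro2017Malliavin}, which gives directly that $T^{\mf{d},L,\omega}_t \varphi \gg 0$, and indeed $T^{\mf{d},L,\omega}_t\varphi(x)>0$ for all interior $x$, whenever $\varphi \ge 0$, $\varphi \neq 0$; existence of a strictly positive principal eigenfunction then follows from the Krein--Rutman theorem \cite[Theorem~19.3]{Deimling1985} together with the identity $e_{\lambda(\omega,L)} = e^{-t\lambda(\omega,L)}T^{\mf{d},L,\omega}_t e_{\lambda(\omega,L)}$, with no approximation argument at all.

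There is also a secondary gap in the part of your limit passage that produces the nonnegative eigenfunction. Convergence of ``top spectral projections'' requires discrete-to-continuum norm resolvent convergence or a collective compactness argument; Theorem \ref{thm:convergence_PAM_Dirichlet} only yields convergence of solutions of the parabolic problem (i.e.\ strong convergence of the semigroups modulo extension operators), and the resolvent analysis of \cite{Chouk2019} concerns mollified noise in the continuum, not lattice approximations, so your citation does not cover the claim. This could plausibly be repaired, but it is genuine extra work --- and it becomes unnecessary once one argues, as the paper does, entirely in the continuum: Krein--Rutman applied to the compact, positivity-improving operator $T^{\mf{d},L,\omega}_t$ produces the positive eigenfunction outright.
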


\begin{proof}
  That the spectrum is discrete and bounded from above can be found in the
  works quoted above.
  For $\varphi,\psi \in L^2((-\frac{L}{2}, \frac{L}{2})^d)$ we write $\psi \ge
  \varphi$ if $\psi(x) - \varphi(x) \ge 0$ for Lebesgue-almost all $x$ and we
  write $\psi \gg \varphi$ if $\psi(x) - \varphi(x) > 0$ for Lebesgue-almost all
  $x$. By the strong maximum principle
  of~\cite[Theorem~5.1]{Cannizzaro2017Malliavin} (which easily extends to our
  setting, see Remark~5.2 of the same paper) we know that for the semigroup
  $T^{\mathfrak d, L, \omega}_t = e^{t \mH^{\omega}_{\mathfrak d, L}}$ of the
  PAM we have $T^{\mathfrak d, L, \omega}_t \varphi \gg 0$ whenever $\varphi \ge
  0$ and $\varphi \neq 0$; we even get $T^{\mathfrak d, L, \omega}_t \varphi(x)
  > 0$ for all $x$ in the interior $(-\frac{L}{2}, \frac{L}{2})^d$. So by a
  consequence of the Krein-Rutman theorem,
  see~\cite[Theorem~19.3]{Deimling1985}, there exists an eigenfunction
  $e_{\lambda(\omega, L)} \gg 0$. And since $e_{\lambda(\omega, L)} = e^{-t
  \lambda(\omega, L)}T^{\mathfrak d, L, \omega}_t e_{\lambda(\omega, L)}$, we
  have $e_{\lambda(\omega, L)}(x) > 0$ for all $x \in (-\frac{L}{2},
  \frac{L}{2})^d$.
\end{proof}

\subsection{Stochastic Estimates}

Here we prove Lemma \ref{lem:renormalisation-neumann}. The following bounds are
essentially an adaptation of \cite[Section 4.2]{ChoukGairingPerkowski2017} to
the Dirichlet boundary condition setting (see also \cite{Chouk2019} for the
spatially continuous setting).
\begin{proof}[Proof of Lemma \ref{lem:renormalisation-neumann}] 
  \textit{ Step 0.} Let us write \(\xi^{n}\) instead of \( \overline{\xi}^{n}\).
  Fix \(L \in \NN\) and take \(\alpha, \ve\) as in the statement of the
  lemma. Instead of proving the path-wise bounds and convergences of the lemma,
  it is sufficient to prove the bounds on average and the convergences in
  distribution. By this we mean that there exists space white noise \(\xi\) on
  \(\RR^{d}\) and (if \(d = 2\)) a random distribution \(X_{\mf{n}} \diamond
  \xi\) such that (all convergences being in distribution):
  \begin{equation}\label{eqn:support_proof_lemma_eq1} 
    \sup_{n} \mathbb{E} [ \| \xi^n \|_{\mathcal{C}^{\alpha {-}
    2}_{\mathfrak{n}}(\Lambda_{n})}^q] < {+} \infty, \qquad
    \mathcal{E}^n_{\mathfrak{n}} \xi^n \to \xi \text{ in } \mathcal{C}^{\alpha
    {-} 2}_{\mathfrak{n}}([0,L]^{d}), 
  \end{equation}
  as well as:
  \begin{equation}\label{eqn:support_proof_lemma_eq2} 
    \sup_n \mathbb{E} [ \| n^{{-} d/2} (\xi^n)_+ \|_{ \mathcal{C}^{{-}
    \varepsilon}_{\mathfrak{n}}(\Lambda_{n})} + \| n^{{-} d/2} (\xi^n)_+
    \|_{L^2(\Lambda_n)}] < {+} \infty,
  \end{equation}
  with \(\mathcal{E}^n_{\mathfrak{n}} n^{{-} d/2} (\xi^n)_+ \to \nu\) in 
  \(\mathcal{C}^{ {-} \varepsilon}_{\mathfrak{n}}([0,L]^d )\). 
  Moreover, in dimension \(d = 2\), we have (recall \(\kappa_{n}\) from
  \eqref{eqn:renormalization_constant}): 
  \begin{equation}\label{eqn:support_proof_lemma_eq3} 
    \sup_{n} \mathbb{E} [ \| X^n_{\mathfrak{n}}
    \|_{\mathcal{C}^{\alpha}_{\mathfrak{n}}(\Lambda_{n})} + \|
    (X^n_{\mathfrak{n}} \reso \xi^n) {-} \kappa_{n} \|_{\mathcal{C}^{2 \alpha
    {-} 2}_{\mathfrak{n}}(\Lambda_{n})} ] < {+} \infty 
  \end{equation}
  as well as \(\mathcal{E}^n_{\mathfrak{n}}
  X^n_{\mathfrak{n}} \to X_{\mathfrak{n}} \text{ in }
  \mathcal{C}^{\alpha}_{\mathfrak{n}}([0,L]^d),\)
  and \(\mathcal{E}^n_{\mathfrak{n}} (X^n_{\mathfrak{n}} \reso \xi^n {-}
  \kappa_{n}) \to X_{\mathfrak{n}} \diamond \xi \text{ in } \mathcal{C}^{2
  \alpha {-} 2}_{\mathfrak{n}}([0,L]^{d})\). Once these bounds and convergences
  are established, and in view of \cite[Lemma 2.4]{PerkowskiRosati2019}, the
  Lemma follows from Skorohod's representation theorem. So far we have proven
  convergence a.s. for fixed \(L, \alpha, \ve\). The extension to all
  \(L, \alpha, \ve\) follows as in Corollary \ref{cor:persistence_coupling_all_L}.
  To find convergence for all \(\omega\) we set all functions to zero on a
  null-set.

  \textit{Step 1.} We now observe that the bound and convergence from
  \eqref{eqn:support_proof_lemma_eq1} as well as the bound and convergence for
  \(X^{n}_{\mf{n}}\) from \eqref{eqn:support_proof_lemma_eq3} are similar to and
  simpler than the bound for \(X^{n}_{\mf{n}} \reso \xi^{n}\). Also, Equation
  \eqref{eqn:support_proof_lemma_eq2} and the following convergences are
  analogous to \cite[Appendix B]{PerkowskiRosati2019}. We are left with proving
  the bound and convergence of \(X^{n}_{\mf{n}} \reso
  \xi^{n}\) from \eqref{eqn:support_proof_lemma_eq3}.
  
  \textit{Step 2.} First, we establish the uniform bounds. We will
  derive only bounds in spaces of the kind \(B^{\mathfrak{n},
  \beta}_{p,p} (\Lambda_{n})\) for appropriate \(\beta\) and any \(p\)
  sufficiently large.  The results on the H\"older scale then follow by Besov
  embedding. In order to avoid confusion, we will omit the subindex
  \(\mathfrak{n}\) in the noise terms. 
  We write sums as discrete integrals against scaled measures with the following
  definitions:
  \[ 
    \int_{\Theta_{n}} \ud x \ f(x) = \sum_{x \in \Theta_n} \frac{f(x)}{n^d}, \ \
    \int_{\Xi_{n}} \ud k \ f(k) = \sum_{k \in \Xi_{n}} \frac{f(k)}{N^{d}} , \ \
    \int_{ \{{-} 1, 1\}^d} \ud \mathfrak{q} \ f(\mathfrak{q}) =
    \sum_{\mathfrak{q} \in \{{-} 1, 1\}^{d}} f(\mathfrak{q}). 
  \]
  For \(k_{1}, k_{2} \in \Xi_{n}\) and \(\mf{q}_{1}, \mf{q}_{2} \in \{ {-} 1,
  1\}^{d}\) we moreover adopt the notation: \(k_{[12]} = k_{1} {+} k_{2},
  \mf{q}_{[12]} = \mf{q}_{1} {+} \mf{q}_{2}\) and \( ( \mf{q} \circ k)_{[12]} =
  \mf{q}_{1} \circ k_{1} {+} \mf{q}_{2} \circ k_{2}\). 
  We first compute:
  \begin{align*}
     \Delta_{j}  \Pi_e (\xi^{n} & \reso X^{n}) (x) 
     = \int\limits_{ (\{{-} 1, 1\}^{d} \times \Xi^{+}_{n})^{2}} \ud
     \mathfrak{q}_{12} \ud k_{12}  \ N^{d} \nu_{k_1}
     \nu_{k_2} e^{2 \pi \iota \langle x, (\mathfrak{q} \circ k )_{[12]}
     \rangle} \cdot \\
     & \qquad \qquad \qquad \qquad \qquad \cdot \varrho_{j} ( (\mathfrak{q}
     \circ k )_{[12]}) \psi_{0}(k_{1}, k_{2})
     \frac{ \chi(k_{2})}{l^{n}(k_{2})} \langle \xi^{n}, \mathfrak{n}_{k_1}
     \rangle \langle \xi^{n}, \mathfrak{n}_{k_{2}} \rangle \\
     & \qquad \qquad \ \ = \int\limits_{ (\{{-} 1, 1\}^{d} \times
     \Xi^{+}_{n})^{2}}  \ud  \mathfrak{q}_{12} \ud k_{12} \
     1_{\{k_1 \neq k_2\}} N^{d} \nu_{k_1} \nu_{k_{2}}e^{2 \pi \iota \langle x,
     ( \mathfrak{q} \circ k )_{[12]}  \rangle} \cdot \\
     & \qquad \qquad \qquad \qquad \qquad \cdot \varrho_{j} ( (\mathfrak{q} \circ k)_{[12]}) \psi_{0}(k_{1}, k_{2})
     \frac{ \chi(k_{2})}{l^{n}(k_{2})}  \langle \xi^{n},
     \mathfrak{n}_{k_1} \rangle \langle \xi^{n}, \mathfrak{n}_{k_{2}} \rangle + \operatorname{Diag}
  \end{align*}
  where  \(\operatorname{Diag}\) indicates the integral over the set \( \{k_{1}
  = k_{2}\}\). The first term can be bounded by a generalized discrete BDG
  inequality for multiple discrete stochastic integrals, see \cite[Proposition
  4.3]{ChoukGairingPerkowski2017}. We can thus bound for arbitrary \(\l \in
  \NN\):
   \begin{align*}
	\mathbb{E}& [ |\Delta_{j} (\Pi_e(\xi^{n} \reso X^{n}) (x) {-}
	\kappa_n)|^{p}] \\ & \lesssim \left[ \int \ud \mathfrak{q}_{12}
	\ud k_{12} \ \bigg\vert \varrho_{j} ( (\mathfrak{q} \circ
	k )_{[12]}) \psi_0(k_1, k_2) \frac{ \chi(k_{2})}{l^{n}(k_{2})}
	\bigg\vert^{2} \right]^{\frac{p}{2} } \mathbb{E} [\langle \xi^{n},
	\mathfrak{n}_{\ell} \rangle^{p}]^{2} + \mathbb{E} [
	|\operatorname{Diag} {-} 1_{ \{j = {-} 1\}} \kappa_n|^{p} ].  
  \end{align*}
  For the first term on the right hand side we have:
  \begin{align*}
    &\int\limits_{(\{{-}  1, 1\}^{d} \times \Xi_{n}^+)^{2}} \!\!\!\!\!\!\!\!\!\! \ud
    \mathfrak{q}_{12} \ud k_{12} \ \bigg\vert \varrho_{j} ( (\mathfrak{q} \circ
    k)_{[12]}) \psi_0(k_1, k_2) \frac{ \chi(k_{2})}{l^{n}(k_{2})}
    \bigg\vert^{2} 
    = \int_{\Xi_{n}^{2}} \!\!\! \ud k_{12} \ \bigg\vert
    \varrho_{j} (k_{[12]}) \psi_0(k_1, k_2) \frac{ \chi(k_{2})}{l^{n}(k_{2})}
    \bigg\vert^{2}  \\ 
    &\hspace{30pt} \lesssim \sum_{i\ge j-\ell}
    \int_{\Xi_{n}^{2}} \ud  k_{12}  \ 1_{ \{|k_1+k_2| \sim 2^{j}\}} 1_{ \{|k_2|
    \sim 2^{i}\}}2^{{-} 4i} \lesssim \sum_{i \ge j-\ell} 2^{jd} 2^{i(d {-} 4)}
    \lesssim 2^{2j(d-2)}, 
  \end{align*}
  which is of the required order (and we used that $d<4$). Let us pass to the
  diagonal term. We first smuggle in the expectation of $\operatorname{Diag}$:
  \begin{align*}
    \mathbb{E} [ |\operatorname{Diag} {-} \mathbb{E} [\operatorname{Diag}
    ]|^{p} ] = \mathbb{E} \left[ \bigg\vert \int_{\Xi_{n}^{+} \times (\{{-} 1,
    1\}^{d})^{2}} \ud \mathfrak{q}_{12} \ud k \ \nu_{k}^{2} e^{2 \pi \iota \langle
    x, \mathfrak{q}_{[12]} \circ k  \rangle}
    \varrho_{j}   (\mathfrak{q}_{[12]} \circ k ) \frac{
    \chi(k)}{l^{n}(k)} \eta(k) \bigg\vert^{p} \right], 
  \end{align*}
  where we have lost the factor \(N^{d}\) due to the normalization of the 
  integral in \(k\) and \(\eta(k) = \langle \xi^{n}, \mathfrak{n}_{k}
  \rangle^{2} {-} \mathbb{E} [ \langle \xi^{n} , \mathfrak{n}_{k} \rangle^{2}]
  = \langle \xi^{n}, \mathfrak{n}_{k} \rangle^{2} {-}1 \) is sequence of
  centered i.i.d random variables. Therefore, we can use the same martingale
  argument as above to bound the integral by:
  \begin{align*}
    \mathbb{E} [ |\operatorname{Diag} {-} \mathbb{E} [\operatorname{Diag} ]|^{p}
    ] & \lesssim  \bigg( \int_{\Xi_{n}^{+}} \ud k \bigg\vert \int_{( \{{-} 1,
    1\}^{d})^{2}} \ud \mathfrak{q}_{12} \ \varrho_{j} (\mathfrak{q}_{[12]}
    \circ k) \bigg\vert^2  \bigg\vert \frac{ \chi(k)}{l^{n}(k)} \bigg\vert^{2}
    \EE[|\eta(k)|^p]^{\frac{2}{p}}\bigg)^{\frac{p}{2} }  \\ & \lesssim
    \bigg(\int_{x\in \RR^d: |x| \gtrsim 2^j } \frac{1}{|x|^4} \ud x\bigg)^{p/2}
    \lesssim 2^{j(d-4)} = 2^{j (\frac{d}{2} - 2)} 
  \end{align*}
  whenever $d < 4$, which is even better than the  bound for the off-diagonal
  terms.  We are hence left with a last, deterministic term:
  \[ 
    \int_{\Xi_{n}^{+} \times (\{{-} 1, 1\}^{d})^{2}} \ud \mathfrak{q}_{12} \ud k
    \ \nu_{k}^{2} e^{2 \pi \iota \langle x, \mathfrak{q}_{[12]} \circ k \rangle}
    \varrho_{j} (\mathfrak{q}_{[12]} \circ k) \frac{ \chi(k)}{l^{n}(k)}  \ \ - \ \
    1_{\{j = {-} 1\}} \kappa_n.  
  \]
  We split up this sum in different terms according to the relative value of
  \(\mathfrak{q}_{1}, \mathfrak{q}_{2}\). If \(\mathfrak{q}_{1} = {-}
  \mathfrak{q}_{2}\) (there are $2^d$ such terms) the sum does not depend on
  \(x\) and it disappears for \(j \geq 0\). Let us assume \(j = {-} 1\). We
  are then left with the constant:
  \begin{align*}
    2^{d} \int_{\Xi_{n}^{+}} \ud k \ \nu_{k}^{2} \frac{ \chi(k)}{l^{n}(k)} -
    \kappa_{n} =  \int_{\Xi_{n}} \ud k \ \frac{ \chi(k)}{l^{n}(k)} - \kappa_{n}.
  \end{align*}
  Note that the sum on the left-hand side diverges logarithmically in \(n\) and
  we now show how to renormalize with \(\kappa_n\). To clarify our computation
  let us also introduce an auxiliary constant \(\bar{\kappa}_n = \int_{\Xi_n}
  \ud k \  \overline{\nu}_{k}^{2} \frac{\chi(k)}{l^n(k)}, \) where \(
  \overline{\nu}_{k} = 2^{{-} \# \{ i \colon k_i = \pm n\}/2}\).  For \(x \in
  \RR^{d}, r \geq 0\), let us indicate with \(Q_{r}^{n}(x) \subseteq
  \TT_{n}^{d}\) the box \( Q^{n}_{r} (x) = \{ y \in \tnd \colon |y {-}
  x|_{\infty} \leq r/2 \}\) ( \(| \cdot|_{\infty}\) being the maximum of the
  component-wise distances in \(\TT^{d}_{n}\)). Then note that we can bound
  uniformly over \(n\) and \(N\):
  \begin{align*}
    \vert \kappa_{n} {-}   \bar{\kappa}_n \vert & = \bigg\vert \int_{\TT^d_n}
    \ud k \ \frac{\chi(k)}{l^n(k)} {-} \int_{\Xi_{n}} \ud k \
    \overline{\nu}_{k}^{2} \frac{\chi(k)}{l^n(k)} \bigg\vert = \bigg\vert
    \sum_{k \in \Xi_n} \int_{Q_{ \frac{1}{N} }^{n}(k)} \ud k^{\prime} \
    \frac{\chi(k {+} k^\prime)}{ l^n(k{+}k^\prime)} {-}
    \frac{\chi(k)}{l^n(k)}\bigg\vert \\ 
    & \lesssim \frac{1}{N} \bigg( 1 + \frac{1}{N^{d}} \sum_{k \in \Xi_n} \sup_{
    \vartheta \in Q_{ \frac{1}{N} } (k)}\frac{ \chi(k) }{ ( l^n(\vartheta) )^2}
    |\nabla l^n (\vartheta) | \bigg) \lesssim \frac{1}{N} \bigg( 1 {+}
    \frac{1}{N^{d}} \sum_{k \in \frac{1}{N} \ZZ^d} \frac{\chi(k)}{|k|^3} \bigg)
    \lesssim \frac{1}{N}, 
  \end{align*}
  where we have used that $d=2$, $|l^n(\vartheta)| \gtrsim |\vartheta|^2$ on
  $[{-}n/2, n/2]^d$ as well as $|\nabla  l^n(\vartheta)| \lesssim |\theta|$ on
  $[{-}n/2, n/2]^d$. Similar calculations show that the difference converges:
  $\lim_{n \to \infty}\kappa_n {-} \bar{\kappa}_n \ \in \RR.$ We are now able to
  estimate:
  \begin{align*} 
    \bigg\vert \int_{\Xi_{n}} \ud k \ \frac{ \chi(k)}{l^{n}(k)} - \kappa_n
    \bigg\vert \lesssim 1 + |\bar{\kappa}_n {-} \kappa_n| \lesssim 1 
  \end{align*}
  where we used that the sum on the boundary $\partial \Xi_n$ converges to zero
  and is thus uniformly bounded in $n$.  For the same reason, the above
  difference converges to the limit \( \lim_{ n \to \infty}
  \overline{\kappa}_{n} {-} \kappa_{n} \in \RR\).
 
  For all other possibilities of $\mathfrak{q}_1, \mathfrak{q}_2$ we will show
  boundedness in a distributional sense.  If \(\mathfrak{q}_{ 1} = \mathfrak{
  q}_{2}\) we have: 
  \begin{align*} 
    \bigg\vert \int_{\Xi_{n}^{+}} \ud k \ \nu_{k}^{2} e^{2 \pi \iota \langle x,
    2 \mathfrak{q}_{1} k \rangle} \varrho_{j}(2 k) \frac{\chi(k)}{l^{n}(k)}
    \bigg\vert \lesssim 2^{j(d {-} 2)}.
  \end{align*} 
  Finally, if only one of the two components of \(\mathfrak{q}_1,
  \mathfrak{q}_2\) differs (let us suppose it is the first one) we find ( with \(x
  = (x_1,x_2)\) and \(k = (k_1,k_2)\)):
  \begin{align*} 
    \bigg\vert \int_{\Xi^+_{n}} \ud k \ \nu_{k}^{2} e^{2 \pi \iota 2x_2 k_2 }
    \varrho_{j}(2k_2) \frac{ \chi(k)}{l^n (k)}  \bigg\vert \lesssim \left(
    \sum_{ k_1 \geq 1}  \frac{1}{|k_1|^{2 \theta}} \right) \left( \sum_{k_2 \geq
    1} \frac{\varrho_{j}(2k_2)}{|k_2|^{2 (1 {-} \theta)}}  \right) \lesssim
    2^{j \varepsilon} 
  \end{align*} 
  for any \(\varepsilon > 0\), up to choosing \(\theta \in (1/2,1)\)
  sufficiently close to \(1/2\).

  \textit{Step 3.} Now we briefly address the convergence in distribution.
  Clearly the previous calculations and compact embeddings of H\"older-Besov
  spaces guarantee tightness of the sequence 
  $X^n_{\mathfrak{n}} \reso \xi^n {-} \kappa_n$ in the required H\"older spaces for
  any $\alpha < 2{-}d/2$. We have to uniquely identify the distribution of any
  limit point. Whereas for $\xi, X^n_{\mathfrak{n}}$ the limit points are Gaussian and
  uniquely identified as white noise $\xi$ and $\Delta_{\mathfrak{n}}^{-1}
  \chi(D) \xi$ respectively, the resonant product requires more care, but we
  can use the same arguments as in \cite[Section~5.1]{MartinPerkowski2017} for
  higher order Gaussian chaoses.  
\end{proof}

\section{Killed rSBM}\label{sect:killed_rsbm}

In this last section we briefly introduce a killed version of the rSBM described
in \cite{PerkowskiRosati2019}. This process arises as a scaling limit of a
branching random walk in a random environment in which a walker is killed once
he leaves a box of size $L \in 2\mathbb{N}$.  Recall that we consider the
lattice approximation \(\Lambda_{n}^L = \{x \in \znd \ \colon \ x \in [{-} L/2,
L/2]^d \}\) (we explicitly write the dependence on \(L\) because we will let
\(L\) vary). Define in addition the space of functions $E^L = \big\{ \eta \in
\NN^{\Lambda^L_n}_0 \ : \ \eta(x) = 0 , \forall x \in \partial \Lambda^L_n
\big\}$. Recall that the last point of Lemma \ref{lem:renormalisation-neumann}
allows us to apply the results of \cite{PerkowskiRosati2019}. We work in the
following framework. 

\begin{assumption}\label{assu:framework_krsbm}

  Let \(\xi^{n}\) be the sequence of random variables on \(\Omega\) constructed
  in Lemma \ref{lem:renormalisation-neumann} and write:
  \[
    \xi^{n}_{e}( \omega, x) = \xi^{n}(\omega, x) {-} c_{n}( \omega)1_{
    \{ d = 2\}}. 
  \]
  Fix \(\varrho= d/2\), let \(u^{n}( \omega, t, x)\) be the process
  constructed in \cite[Definition 2.6]{PerkowskiRosati2019} and let \(\mu^{n}(\omega, t)\) be
  the measure associated to it. Such process lives on a probability space:
  \[ \big( \Omega \times \overline{\Omega}, \mF, \PP \ltimes \PP^{\omega, n}\big),\]
  where \(\PP^{\omega}\) is the quenched law of \(u^{n}\), conditional on the
  environment \(\xi^{n}(\omega)\), for \(\omega \in \Omega\).

\end{assumption}

The BRWRE \(u^{n}\) does not keep track of the individual particles (all
  particles are identical, only their position matters, cf \cite[Appendix
A]{PerkowskiRosati2019}). We shall also consider the labelled process, which
distinguishes individual particles and kill all particles which leave a given
box. We thus introduce the space
\(E_{\operatorname{lab}} = \bigsqcup_{m \in \NN} ( \frac{1}{n} \ZZ^d \cup
\{\Delta\})^m\),
where $\bigsqcup$ denotes the disjoint union, endowed with the discrete
topology. Here $\Delta$ is a cemetery state. For $\eta \in
E_{\operatorname{lab}}^{n} $ we write $ \operatorname{ dim } (\eta) = m$ if $\eta
\in ( \frac{1}{n} \ZZ^d \cup \{ \Delta \})^m$. A rigorous construction of the
process below follows a in \cite[Appendix A]{PerkowskiRosati2019}.

\begin{definition}\label{def:labelled_markov_process_definition}
  Fix $\omega \in \Omega$ and \(X^{n}_{0} \in E_{\mathrm{lab}}^{n}\) with
  \(\mathrm{dim}(X^{n}_{0}) = \lfloor n^{\varrho} \rfloor, (X^{n}_{0} )_{i} = 0,
  i = 1 \dots \lfloor n^{\vr} \rfloor \). Construct the Markov jump process
  \(X^{n}(\omega)\) on \(E_{\operatorname{lab}}^{n}\) via \(X^{n}(0) = X^{n}_0\)
  with generator:
  \begin{align*} 
    \mL_{\operatorname{lab}}^{\omega} & ( F ) (\eta) = \sum_{i = 1}^{
    \operatorname{ dim } (\eta)} 1_{ \{ \frac{1}{n} \ZZ^d\}}(\eta_i) \Big[
      \ \ \sum_{|y {-}  \eta_i| = n^{{-} 1}}\big( F( \eta^{i \mapsto y}) {-}
      F(\eta)\big) \\ 
    & + (\xi^n)_+ (\omega, \eta_i) \big( F(\eta^{i, +}) {-} F(\eta)\big) {+}
    (\xi^n)_- (\omega, \eta_i) \big( F(\eta^{i, -}) {-} F(\eta)\big) \Big],
  \end{align*}
  where \(\eta^{i \mapsto y}_j= \eta_j(1 {-} 1_{\{i\} }(j)) {+} y 1_{\{i\}}(j)
  \) and \(\eta^{i, +}_j = \eta_j 1_{ [0, \operatorname{ dim } (\eta)]}(j) {+}
  \eta_i 1_{\{ \operatorname{dim} (\eta) {+} 1\}}(j)\) as well as \(\eta^{i,
  -}_j = \eta_{j}(1 {-} 1_{\{i\}}(j)) {+} \Delta 1_{\{i\}}(j)\), on the domain
  \( \mD( \mL^{\omega}_{\mathrm{lab}})\) of functions \(F\)
  is such that the right hand-side is bounded. We can then redefine the process
  \[
    u^{n}(\omega, t,x)= \# \{i \in \{i, \ldots,  \operatorname{ dim }
    (X^{n}(\omega, t))\} \ \colon \ X_i( \omega,t) =x\} 
  \] 
  which has the same quenched law \(\PP^{\omega}\) as the process above.

  Similarly, for \(i \in \NN\) consider \(\tau_i^{n,L}(\omega) = \inf \{ t \geq
  0 \colon \mathrm{dim}(X^{n}(\omega, t)) \geq i \text{ and } X^{n}_i(t) \in
  \partial \Lambda^{L}_{n}\}\). Define  $X^{n,L}(\omega, t) \in
  E_{\mathrm{lab}}^{n}$ by \( \mathrm{dim}(X^{n, L }(\omega, t)) =
  \mathrm{dim}(X^{n}(\omega, t))\) and \(X^{n, L}_{i}(\omega , t) =
  X^{n}_{i} (\omega, t) 1_{ \{t < \tau^{n , L}_{i}( \omega) \}} + \Delta
  1_{ \{ \tau^{n, L}_{i}(\omega) \leq t\}}\).
  
  Define $u^{n, L}$ taking values in $E^L$ by
  \[ 
    u^{n, L}(\omega, t, x) = \# \{ i \in \{ 1, \ldots , \mathrm{dim} ( X^{n,
      L}(\omega, t)) \} \ : \ X^{n, L}_{i}(\omega, t) = x
    \}.
  \]
  \end{definition}

Write $\mM(({-}L/2, L/2)^d)$ for the set of all finite positive measures on
$({-}L/2, L/2)^d$ and for \(\mu, \nu\) in this space we say \(\mu \geq \nu\) if
also \(\mu {-} \nu\) is a \emph{positive} measure. The following result is now
easy to verify (cf. \cite[Appendix A]{PerkowskiRosati2019}).

\begin{lemma} 

  For any $\omega \in \Omega$ the process $ t \mapsto u^{n,L}( \omega, t,
  \cdot)$ is a Markov process with paths in $\DD([0, {+}
  \infty); E^L)$, associated to the generator $\mL^{n , \omega}_L\colon C_b(E^L)
  \to C_b(E^L)$ defined via:
  \begin{align*} \mL^{n , \omega}_L(F) (\eta) & =  \sum_{x
    \in \Lambda^L_n \setminus \partial \Lambda^L_n} \eta_x \cdot  \bigg[  \sum_{x
      \sim y} n^2( F(\eta^{x \mapsto y}) {-} F(\eta)) \\
      &\hspace{80pt} {+} (\xi^n_e)_+(\omega, x) [ F( \eta^{x{+}}) {-} F(\eta)] + (\xi^n_e)_{-}(\omega, x)  [F(\eta^{x{-}}) {-} F(\eta)]\bigg], 
  \end{align*}
  where for $\eta \in E^L$ we define $\eta^{x \mapsto y}(z) = (\eta(z) {-} 1_{\{ z
  = x\}} {+} 1_{\{z = y, \ y \not\in \partial\Lambda^L_n\}})_+$ and $\eta^{x
  \pm}(z) = (\eta(z) \pm 1_{ \{ z = x\}})_{+}$.  
  We associate to \(u^{n, L}(\omega, t)\) a measure:
  \begin{equation}\label{eqn:defn_of_mu_n_dirichlet}
    \mu^{n, L}(\omega, t) (\varphi) = \sum_{x \in \Lambda^L_n} \lfloor n^{-\vr}
    \rfloor u^{n , L}(\omega, t, x) \varphi (x), \qquad \forall \varphi \in
    C(({-} L/2, L/2)^{d}).
  \end{equation}
  Finally:
  \begin{equation}\label{eqn:comparison_discrete_measures}
    \mu^{n, L} (\omega, t) \leq \mu^{n, L {+} 2} (\omega, t) \leq \dots \leq
    \mu^{n}(\omega, t) \qquad \forall \omega \in \Omega, t \geq 0. 
  \end{equation}
\end{lemma}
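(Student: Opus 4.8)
The plan is to derive everything from the single labelled process $X^{n}(\omega)$ of Definition~\ref{def:labelled_markov_process_definition}, of which all the boundary-killed processes $X^{n,L}(\omega)$ (and hence the occupation fields $u^{n,L}$) are deterministic functionals defined on one and the same probability space. This viewpoint furnishes, for free, a coupling of all the processes as $L$ varies, which is precisely what the chain of inequalities \eqref{eqn:comparison_discrete_measures} calls for. I would split the argument into the identification of the generator and the path space, and then the monotonicity.

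For the first part I would argue that the prescription ``send the label $i$ to the cemetery $\Delta$ as soon as its trajectory $X^{n}_{i}$ meets $\partial\Lambda^{L}_{n}$'' is nothing but absorption of each particle at the boundary, a Markovian operation because $\tau^{n,L}_{i}$ is a stopping time depending only on the path of label $i$. Thus $X^{n,L}(\omega)$ is again a Markov jump process on $E^{n}_{\mathrm{lab}}$, and projecting onto occupation numbers $u^{n,L}(\omega,t,\cdot)=\#\{i:X^{n,L}_{i}(\omega,t)=\cdot\}$ preserves the Markov property by exchangeability of equally located particles, exactly as in \cite[Appendix~A]{PerkowskiRosati2019}. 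The generator is then read off by summing the labelled rates over all particles sitting at a given site: the random-walk and branching terms reproduce $\mL^{n,\omega}_{L}$, while absorption at the boundary is encoded precisely by restricting the outer sum to interior sites $x\in\Lambda^{L}_{n}\setminus\partial\Lambda^{L}_{n}$ and by the jump kernel $\eta^{x\mapsto y}$, which discards a particle whenever $y\in\partial\Lambda^{L}_{n}$. Since no alive label ever occupies a boundary site, $u^{n,L}(\omega,t)\in E^{L}$, and càdlàg paths in $\DD([0,+\infty);E^{L})$ are inherited from the underlying jump process.

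For \eqref{eqn:comparison_discrete_measures} the decisive deterministic input is the monotonicity of the killing times along one and the same trajectory,
\[
  \tau^{n,L}_{i}(\omega)\ \le\ \tau^{n,L+2}_{i}(\omega), \qquad i\in\NN,\ L\in 2\NN,
\]
which is where the lattice geometry enters. Because particles perform nearest-neighbour jumps of size $n^{-1}$ and are born inside the open box, the first time a trajectory meets $\partial\Lambda^{L}_{n}$ coincides with its first exit time from the interior $\Lambda^{L}_{n}\setminus\partial\Lambda^{L}_{n}$: a single coordinate cannot pass from a value $\le L/2-n^{-1}$ to a value $>L/2$ without first landing exactly on the separating layer $\{\,\cdot\,=\pm L/2\}$. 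Since the interiors are nested, the first exit from the smaller interior occurs no later than from the larger one, giving the displayed ordering. Consequently $\{t<\tau^{n,L}_{i}\}\subseteq\{t<\tau^{n,L+2}_{i}\}$ for every label $i$, so a label alive at a site $x$ in the $L$-killed process is alive at the same site in the $(L+2)$-killed process; summing over labels yields the pointwise bound $u^{n,L}(\omega,t,x)\le u^{n,L+2}(\omega,t,x)\le u^{n}(\omega,t,x)$, the last inequality because killing only removes labels. The measure inequalities follow at once, since each $\mu^{n,L}$ is a fixed positive multiple of $u^{n,L}$ tested against nonnegative $\varphi$, so a pointwise-dominated field produces a dominated measure in the sense of \eqref{eqn:comparison_discrete_measures} after extending $\mu^{n,L}$ by zero outside $(-L/2,L/2)^{d}$.

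The routine ingredients are the generator computation and the $E^{L}$-valued càdlàg paths, which are literally the boundary-killed specialisation of \cite[Appendix~A]{PerkowskiRosati2019}. The one step that deserves genuine care, and which I expect to be the main obstacle, is the geometric claim that the first time a nearest-neighbour trajectory meets $\partial\Lambda^{L}_{n}$ equals its first exit from the open interior of $\Lambda^{L}_{n}$: one must check that at that first exit instant exactly one coordinate sits at $\pm L/2$ while all the others are still confined to $[-L/2,L/2]$, so that the particle indeed lies on $\partial\Lambda^{L}_{n}$ rather than already outside it. Once this separation property is granted, nestedness of the interiors gives the monotonicity of the killing times in $L$, and the whole comparison \eqref{eqn:comparison_discrete_measures} drops out of the coupling.
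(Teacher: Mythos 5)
Your overall architecture --- everything is a deterministic functional of the single labelled process of Definition \ref{def:labelled_markov_process_definition}, so the coupling needed for \eqref{eqn:comparison_discrete_measures} comes for free --- is the natural and intended one (the paper itself offers no proof beyond ``easy to verify, cf.\ \cite[Appendix A]{PerkowskiRosati2019}''). The generator identification and the no-crossing geometry (a nearest-neighbour path cannot pass the layer $\{x \colon x_i = \pm L/2 \text{ for some } i\}$ without landing on it, since $L/2$ is a lattice point) are both correct. But there is a genuine gap in the comparison step: the displayed inequality $\tau^{n,L}_i(\omega) \le \tau^{n,L+2}_i(\omega)$ for \emph{every} label $i$ is false, and the premise you invoke for it --- ``particles \dots are born inside the open box'' --- is wrong for the free process. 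Killing in Definition \ref{def:labelled_markov_process_definition} is per label, not per subtree: offspring are born at the parent's current position in the \emph{free} process, which ranges over all of $\frac{1}{n}\ZZ^d$. A label born in the annulus between the two boxes, say at $(L/2 + 5/n, 0, \dots, 0)$, that drifts outward hits $\partial\Lambda^{L+2}_n$ in finite time but need never hit $\partial\Lambda^{L}_n$, so that $\tau^{n,L+2}_i < \tau^{n,L}_i = \infty$.

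The conclusion \eqref{eqn:comparison_discrete_measures} survives, but you need an extra observation to dispose of these ``ghost'' labels: a label born on $\partial\Lambda^L_n$ is dead instantly in the $L$-killed process, and a label born outside $[-L/2,L/2]^d$ must, by your own no-crossing lemma, land on $\partial\Lambda^L_n$ (and hence die) before it can ever occupy a site of $\Lambda^L_n\setminus\partial\Lambda^L_n$; consequently such labels never contribute to $u^{n,L}$, and the stopping-time comparison is only needed --- and is then correct --- for labels born inside the open box of size $L$. The same ghosts are silently ignored in your Markov-property argument: $u^{n,L}$ records occupation numbers only on $\Lambda^L_n$, so alive-but-outside labels are discarded by the projection, and one needs precisely the no-re-entry property to see that neither they nor their descendants can ever influence the projected process; without this, autonomy (hence Markovianity) of $u^{n,L}$ does not follow from exchangeability alone. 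Both patches are one-line applications of the geometric lemma you already proved, so the argument is fixable, but as written the chain ``$\tau^{n,L}_i \le \tau^{n,L+2}_i$ for all $i$, hence $u^{n,L} \le u^{n,L+2}$'' contains a false intermediate claim.
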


When studying the convergence of the process \(\mu^{n ,L}\), special care has to
be taken with regard to what happens on the boundary of the box. Indeed a
function \(\varphi \in C^{\infty}([{-} L/2, L/2]^d )\) (i.e. smooth in the
interior with all derivatives continuous on the entire box) is not smooth in the
scale of spaces \(B^{ \mf{l}, \alpha}_{p, q}\) for \(\mf{l} \in \{ \mf{d},
\mf{n}\}\), since it does not satisfy the required boundary conditions: a priori
it only lies in the above space for \(\alpha=0\) and any value of \(p, q\).  For
this reason we consider a weaker kind of convergence for the processes \(\mu^{n,
L}\) than one might expect. We write
\[
	\mM_0^L = \big( \mM( ({-} L/2, L/2)^d ), \tau_v\big)
\]
of finite positive measures on \( ({-} L/2, L/2)^d\) endowed with the vague
topology \(\tau_v\) (cf. \cite[Section 3]{DawsonPerkins2012}), i.e. \(\mu^n \to
\mu\) in \(\mM_0^L\) if \( \mu^n(\varphi) \to \mu( \varphi),\) for all \(\varphi
\in X\), where \(X\) can be chosen to be either the space \(C^{\infty}_c( ({-}
L/2, L/2)^d)\) of smooth functions with compact support or the space \(C_0( ({-}
L/2, L/2)^d)\) of continuous functions which vanish on the boundary of the box
(the latter is a Banach space, when endowed with the uniform norm). The reason
why this topology is convenient is that sets of the form \(K_R \subset
\mM_0^L\), with \( K_R = \{ \mu \in \mM^L_0 \ : \ \mu(1) \leq R\}\) are compact.
In this setting it is also important to remark the following embedding, which
follows from a short calculation.

\begin{remark}\label{rem:continuous_embedding_dirichlet}
  
  For $\alpha > 0$ there is a continuous (in the sense of Banach spaces) embedding
	  \[
		  \mC^{\alpha}_{\mf{d}}([{-} L/2, L/2]^d) \hookrightarrow C_0(
		({-} L/2, L/2)^d).
	 \]

      \end{remark}

Now we can pass to study the convergence of the killed process.

\begin{lemma}\label{lem:compact_containment_Dirichlet}

  We can bound the mass of the killed process locally uniformly in time. Namely,
  for any \(\omega \in \Omega\):
  \[ 
    \lim_{R \to \infty} \sup_n \PP^{\omega, n} \Big( \sup_{t \in [0, T]}\mu^{n,
    L}(\omega, t)(1) \geq R \Big) = 0 , \qquad \sup_n \sup_{t \in [0, T]} \| T^{n,
    \mf{d}, L , \omega }_{t} 1  \|_{\infty} < {+} \infty.
  \]

\end{lemma}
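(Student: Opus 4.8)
The plan is to prove the two bounds separately, beginning with the deterministic semigroup estimate, since it also yields the first-moment control that underlies the compact containment statement.

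\textbf{The semigroup bound.} First I would reduce the Dirichlet semigroup to the periodic torus. By the equivalence recorded in the proof of Theorem~\ref{thm:convergence_PAM_Dirichlet}, running $T^{n,\mf{d},L,\omega}_t$ on $\Lambda^L_n$ corresponds, under the odd extension, to running the periodic semigroup $\tilde T^{n,\omega}_t = e^{t(\Delta^n + \Pi_e(\xi^n_e))}$ on $\Theta_n$ against $\Pi_o$ of the datum, so that, using $\|u\|_{L^\infty(\Lambda^L_n)} = \|\Pi_o u\|_{L^\infty(\Theta_n)}$,
\[
  \|T^{n,\mf{d},L,\omega}_t 1\|_{L^\infty(\Lambda^L_n)} = \|\tilde T^{n,\omega}_t \Pi_o 1\|_{L^\infty(\Theta_n)}.
\]
For each fixed $n$ the generator $\Delta^n + \Pi_e(\xi^n_e)$ is a genuine finite matrix whose off-diagonal entries (coming from $\Delta^n$) are nonnegative, hence $\tilde T^{n,\omega}_t$ is entrywise nonnegative and $|\tilde T^{n,\omega}_t \varphi| \le \tilde T^{n,\omega}_t |\varphi|$ pointwise. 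Since $|\Pi_o 1| \equiv 1 = \Pi_e 1$, this gives $\|\tilde T^{n,\omega}_t \Pi_o 1\|_\infty \le \|\tilde T^{n,\omega}_t 1\|_{L^\infty(\Theta_n)}$, reducing the estimate to the periodic PAM solution started from the constant datum $1$.

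The remaining task is the uniform bound $\sup_n \sup_{t\le T} \|\tilde T^{n,\omega}_t 1\|_{L^\infty(\Theta_n)} < \infty$, and I expect this to be the main obstacle. Here $\tilde T^{n,\omega}_t 1$ is precisely the solution of the (in $d=2$ renormalized) periodic PAM started from $1$, which is smooth, lying in every $\mC^\zeta_{\mf n}$. I would invoke the periodic counterpart of Theorem~\ref{thm:convergence_PAM_Dirichlet} together with the uniform stochastic bounds of Lemma~\ref{lem:renormalisation-neumann}: because the initial datum carries positive regularity, the solution sits, uniformly in $n$, in $\mL^{\gamma,\vt}_{\mf n,\infty}$ with $\gamma=0$ and some $\vt>0$, so $\sup_{t\le T}\|\tilde T^{n,\omega}_t 1\|_{B^{\vt}_{\infty,\infty}(\Theta_n)}$ is bounded uniformly in $n$, and the Besov embedding $B^{\vt}_{\infty,\infty}(\Theta_n) \hookrightarrow L^\infty(\Theta_n)$ for $\vt>0$ closes the estimate. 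The delicate point is genuinely this $d=2$ bound: one cannot simply discard the renormalization $c_n \sim \log n$, but must let the factor $e^{-t c_n}$ cancel against the growth of $e^{t(\Delta^n+\Pi_e \xi^n)}$ inside the renormalized resonant product, which is exactly what the paracontrolled machinery of \cite{MartinPerkowski2017} delivers uniformly in $n$.

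\textbf{Compact containment.} For the probabilistic bound I would exploit the pathwise domination \eqref{eqn:comparison_discrete_measures}: on the common space $(\Omega\times\overline\Omega, \PP\ltimes\PP^{\omega,n})$ the killed measure is obtained from the full one by removing particles, so $\mu^{n,L}(\omega,t) \le \mu^n(\omega,t)$ for all $t$, whence $\sup_{t\le T}\mu^{n,L}(\omega,t)(1) \le \sup_{t\le T}\mu^n(\omega,t)(1_{\Lambda^L_n})$ pathwise. Since $\Lambda^L_n$ lies in the fixed box $[-L/2,L/2]^d$, any polynomial weight $\boldsymbol{\vr}$ of the type used in \cite{PerkowskiRosati2019} is bounded below there by a constant $c_L>0$, so $\mu^n(t)(1_{\Lambda^L_n}) \le c_L^{-1}\mu^n(t)(\boldsymbol{\vr})$. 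The last point of Lemma~\ref{lem:renormalisation-neumann} ensures that $\xi^n(\omega)$ satisfies \cite[Assumption 2.3]{PerkowskiRosati2019}, so the compact containment of the weighted total mass of the unkilled process $\mu^n$ established there applies for each $\omega$, giving
\[
  \sup_n \PP^{\omega,n}\Big( \sup_{t\le T}\mu^{n,L}(\omega,t)(1) \ge R \Big) \le \sup_n \PP^{\omega,n}\Big( \sup_{t\le T}\mu^n(\omega,t)(\boldsymbol{\vr}) \ge c_L R \Big) \xrightarrow[R\to\infty]{} 0.
\]
As a consistency check, the semigroup bound yields the first-moment estimate $\EE^{\omega,n}[\mu^{n,L}(t)(1)] = \mu^{n,L}(0)(T^{n,\mf{d},L,\omega}_t 1) \le \|T^{n,\mf{d},L,\omega}_t 1\|_\infty\, \mu^{n,L}(0)(1)$, which is uniformly bounded since the initial mass $\lfloor n^{-\vr}\rfloor \lfloor n^{\vr}\rfloor$ is $\approx 1$; one could then run a Doob inequality on the martingale $t\mapsto \mu^{n,L}(t)(T^{n,\mf{d},L,\omega}_{T-t}1)$, but the decay of $T^{n,\mf{d},L,\omega}_{T-t}1$ near $\partial\Lambda^L_n$ makes the comparison argument the cleaner route.
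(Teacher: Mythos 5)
Your proposal is correct, and it splits exactly as the paper does into the two bounds, but the second bound is handled by a genuinely different mechanism. For the probabilistic bound you argue precisely as the paper: domination of $\mu^{n,L}$ by the whole-space process via \eqref{eqn:comparison_discrete_measures} and then the compact containment of \cite[Corollary 4.3]{PerkowskiRosati2019}; your remark that $1_{\Lambda^L_n}$ is bounded by $c_L^{-1}$ times the polynomial weight used there is exactly the detail needed to make that citation apply. For the semigroup bound the paper simply applies Theorem \ref{thm:convergence_PAM_Dirichlet} to the Dirichlet problem with datum $1$, the whole point being that $|\Pi_o 1|\equiv 1$, so $1$ is an admissible initial condition of regularity $\zeta=0$ uniformly in $n$; you instead pass to the torus, use entrywise nonnegativity of $e^{t(\Delta^n+\Pi_e(\xi^n_e))}$ to dominate $|\tilde T^{n,\omega}_t\Pi_o 1|\le \tilde T^{n,\omega}_t 1$ pointwise, and then run the periodic theory of \cite{MartinPerkowski2017} from the constant (hence smooth) datum; both routes feed on the same stochastic input from Lemma \ref{lem:renormalisation-neumann}, since the even extensions $\Pi_e\xi^n$, $\Pi_e X^n_{\mf{n}}$ and the renormalized resonant product are what that lemma controls. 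Your detour through positivity buys a cleaner endgame: with a smooth datum the solution carries no blow-up at $t=0$ (your $\gamma=0$), so the uniform-in-time $L^\infty$ bound follows at once from Besov embedding, whereas with the paper's choice $\zeta=0$ the conclusion of Theorem \ref{thm:convergence_PAM_Dirichlet} only yields boundedness in $\mL^{\gamma,\vartheta}_{\mf{d},\infty}$ with $\gamma>\vartheta/2>0$, which permits a blow-up of the $\mC^\vartheta$ norm as $t\to 0$; extracting $\sup_{t\le T}\|w^n(t)\|_{L^\infty}$ there requires the additional (easy, but unstated) observation that the heat flow is an $L^\infty$ contraction and the Duhamel correction vanishes as $t\to0$. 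So your argument is slightly longer but closes a small gap that the paper's one-line justification leaves implicit.
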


\begin{proof}
  The first bound follows from comparison with the process on the whole real
  line (i.e. Equation \eqref{eqn:comparison_discrete_measures}), see
  \cite[Corollary 4.3]{PerkowskiRosati2019}. The second bound follows from Theorem
  \ref{thm:convergence_PAM_Dirichlet} because the antisymmetric extension of $1$
  is in $L^\infty$: we have \(| \Pi_{o} 1 (\cdot)| \equiv 1\).
\end{proof}

\begin{lemma}

  For every \(\omega \in \Omega\) the sequence \(\{ t \mapsto \mu^{n, L}(\omega,
  t)\}_{n \in \NN}\) is tight in the space \(\DD(\RR_{\ge 0}; \mM^L_0)\). Any
  limit point \(\mu^L(\omega)\) lies in \(C(\RR_{\ge 0}; \mM_0^L)\).

\end{lemma}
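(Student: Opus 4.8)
The plan is to establish tightness in $\DD(\RR_{\ge 0}; \mM_0^L)$ via Jakubowski's criterion (or the analogous criterion used for measure-valued processes in the vague topology, cf.~\cite{DawsonPerkins2012}), and then to upgrade any limit point to a continuous path. Since $\mM_0^L$ is endowed with the vague topology and sets $K_R = \{\mu : \mu(1) \le R\}$ are compact, the compact containment condition is exactly the first display of Lemma~\ref{lem:compact_containment_Dirichlet}: for every $\omega$ and every $\eta > 0$ there is a compact $K_R$ with $\sup_n \PP^{\omega,n}(\exists t \in [0,T] : \mu^{n,L}(\omega,t) \notin K_R) < \eta$. This handles the ``vertical'' compactness for free.

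For the ``horizontal'' part I would verify tightness of the real-valued processes $t \mapsto \mu^{n,L}(\omega,t)(\varphi)$ for $\varphi$ ranging over a countable separating family in $X$ (say a dense countable subset of $C_c^\infty(({-}L/2,L/2)^d)$, which is separating and closed under the required operations). For each such $\varphi$ one applies Aldous' criterion: the key input is the semimartingale decomposition of $t \mapsto \mu^{n,L}(\omega,t)(\varphi)$ obtained by applying the generator $\mL_L^{n,\omega}$ to the functional $F(\eta) = \mu^{n,L}(\eta)(\varphi) = \sum_x \lfloor n^{-\vr}\rfloor \eta_x \varphi(x)$. The drift involves $\Delta^n_{\mf{d}}$ applied to $\varphi$ (which stays bounded since $\varphi$ is smooth and compactly supported away from the boundary) together with the potential term, and the quadratic variation of the martingale part is controlled by the mass bound already furnished by Lemma~\ref{lem:compact_containment_Dirichlet} together with Assumption~\ref{assu:noise} and the noise bounds of Lemma~\ref{lem:renormalisation-neumann}(ii). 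Because $\varphi$ has compact support in the open box, the boundary killing contributes no extra term to the action of the generator on $F$, which is precisely why the vague topology is the convenient choice here. The uniform mass bound then controls the drift and bracket uniformly in $n$ and over $[0,T]$, yielding the Aldous estimate.

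The \emph{continuity of limit points} follows by showing that the jumps vanish in the limit: the maximal jump of $\mu^{n,L}(\omega,\cdot)(\varphi)$ is of order $\lfloor n^{-\vr}\rfloor \|\varphi\|_\infty$, since a single particle carries mass $\lfloor n^{-\vr}\rfloor$ and each elementary transition (motion, branching, or killing) changes $\eta$ by one unit at a single site. With $\vr = d/2 > 0$ this tends to $0$, so any limit point has no jumps on the separating family and hence lies in $C(\RR_{\ge 0}; \mM_0^L)$, exactly as in the infinite-volume argument of \cite[Section~3]{PerkowskiRosati2019}.

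\emph{The main obstacle} I anticipate is the bookkeeping around the boundary: one must confirm that restricting the test functions to $C_c^\infty(({-}L/2,L/2)^d)$ genuinely separates points of $\mM_0^L$ and is compatible with Jakubowski's criterion in the vague topology, and that the action of the killed generator $\mL_L^{n,\omega}$ on $F(\eta) = \mu^{n,L}(\eta)(\varphi)$ produces no spurious boundary contribution (the terms $\eta^{x\mapsto y}$ with $y \in \partial\Lambda_n^L$ truncate the mass, but $\varphi(y) = 0$ there for $\varphi$ supported in the interior, once $n$ is large enough). Granting this, the remainder reduces to the generator computation plus the uniform mass bound from Lemma~\ref{lem:compact_containment_Dirichlet}, both of which are either already established or direct adaptations of \cite{PerkowskiRosati2019}.
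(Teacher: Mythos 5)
Your overall skeleton coincides with the paper's: Jakubowski's criterion, compact containment from Lemma~\ref{lem:compact_containment_Dirichlet}, tightness of the real-valued projections \(t\mapsto \mu^{n,L}(t)(\varphi)\) for \(\varphi \in C_c^{\infty}(({-}L/2,L/2)^d)\), and continuity of limit points because individual jumps have size at most \(\lfloor n^{\vr}\rfloor^{-1}\|\varphi\|_{\infty} \to 0\) (this last point is indeed how \cite[Lemma 4.4]{PerkowskiRosati2019} argues). The genuine gap is in how you carry out the projection step. Applying the generator to \(F(\eta)=\sum_x \lfloor n^{\vr}\rfloor^{-1}\eta_x\varphi(x)\) produces the drift \(\int_0^t \mu^{n,L}(s)\big((\Delta^n_{\mf{d}}+\xi^n_e)\varphi\big) \ud s\), and the potential part of this drift is \emph{not} controlled by the uniform mass bound: under Assumption~\ref{assu:noise} the unrescaled potential is of size \(\xi^n(x) = n^{d/2}\cdot O(1)\), and in \(d=2\) one additionally subtracts \(c_n \sim \log n\), so \(\|\xi^n_e\varphi\|_{L^{\infty}}\) diverges and the estimate \(|\mu^{n,L}(s)(\xi^n_e\varphi)| \le \mu^{n,L}(s)(1)\,\|\xi^n_e\varphi\|_{L^{\infty}}\) gives nothing uniform in \(n\). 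The bounds you cite from Lemma~\ref{lem:renormalisation-neumann}(ii) concern the rescaled noise \(n^{-d/2}\xi^n_{\pm}\); they are the right input for the quadratic variation (each branching jump carries the factor \(\lfloor n^{\vr}\rfloor^{-1} = n^{-d/2}\cdot O(1)\), which converts \(|\xi^n|\) into \(n^{-d/2}|\xi^n|\)), but no such factor appears in the drift. The drift is individually divergent; its positive and negative parts cancel only because the empirical measure is itself correlated with the environment, and capturing this cancellation is exactly what the paracontrolled PAM theory is for --- it cannot be bypassed by a crude sup-norm bound.

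The repair, and the route the paper actually takes, is duality with the discrete PAM: following \cite[Lemma 4.2]{PerkowskiRosati2019}, one either observes that \(s \mapsto \mu^{n,L}(s)\big(w^n(t-s)\big)\) is a martingale when \(w^n\) solves \eqref{eqn:pam-rigorous-discrete_dirichlet} with \(f^n=0\) and \(w^n(0)=\varphi\), or equivalently expresses the conditional first and second moments of \(\mu^{n,L}(t)(\varphi)\) through the semigroup \(T^{n,\mf{d},L,\omega}\); the product \(\xi^n_e \varphi\) then never appears on its own. The uniform-in-\(n\) bounds of Theorem~\ref{thm:convergence_PAM_Dirichlet} in the spaces \(\mL^{\gamma,\vt}_{\mf{d},p}(\Lambda_n)\) are what make Aldous' criterion go through; this is precisely why the paper's short proof invokes Theorem~\ref{thm:convergence_PAM_Dirichlet} at this step, whereas your sketch never uses that theorem here. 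Once you substitute this duality argument for the naive generator computation, the rest of your plan (compact containment, separating family of test functions, vanishing jump sizes) is sound and matches the paper.
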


\begin{proof}
   
  We want to apply Jakubowski's tightness criterion \cite[Theorem
  3.6.4]{DawsonPerkins2012}. The sequence \(\mu^{n, L}\) satisfies the compact
  containment condition in view of Lemma
  \ref{lem:compact_containment_Dirichlet}. The tightness thus follows if we
  prove that the sequence \(\{ t \mapsto \mu^n (t)(\varphi)\}_{n \in \NN}\) is
  tight in \(\DD([0,T]; \RR)\) for any \(\varphi \in C^{\infty}_{c}( ({-} L/2,
  L/2)^d)\). Here we can follow the calculation of \cite[Lemma
  4.2]{PerkowskiRosati2019} (only simpler, since we do not need weights), using
  the results from Theorem \ref{thm:convergence_PAM_Dirichlet}. The continuity
  of the limit points is shown as in \cite[Lemma 4.4]{PerkowskiRosati2019}.

\end{proof}
 
We will characterize the limit points of $\{\mu^{n,L}\}_{n \in \NN}$ in a
similar way as the rough super-Brownian motion, and for that purpose we need to
solve the following equation (for any \(\omega \in \Omega, L \in 2 \NN\)):
\begin{equation}\label{eqn:PDE_laplace_duality_continuous_dirichlet}
  \partial_t\varphi = \mH_{\mfd, L}^{\omega} \varphi {-} \nu \varphi^2, \ \
  \varphi(0) = \varphi_0, \ \ \varphi(t, x) = 0, \  \forall (t, x) \in
  (0,T]\times\partial [{-}L/2,L/2]^d, 
\end{equation}
where we define \(\varphi\) a solution to
\eqref{eqn:PDE_laplace_duality_continuous_dirichlet} if
\[ 
  \varphi(t) = T^{\mf{d}, L, \omega}_{t} \varphi_{0} - \nu \int_{0}^{t}
  T^{\mf{d}, L, \omega}_{t {-} s} [\varphi^{2}(s)]  \ud s. 
\] 

\begin{lemma}\label{lem:existence_solutions_duality_PDE_dirichlet} 
  
  Fix \(\omega \in \Omega, L \in 2\NN\). For $T>0$ and $\varphi_0 \in
  C^{\infty}_c(({-}L/2, L/2)^d)$ with $\varphi_0 \ge 0$ and $\vartheta$ as in
  Theorem \ref{thm:convergence_PAM_Dirichlet}, there exists a unique
  (paracontrolled in $d=2$) solution $\varphi \in
  \mL^{\vartheta}_{\mfd}([{-}L/2, L/2]^d)$ to
  \eqref{eqn:PDE_laplace_duality_continuous_dirichlet} and the following bounds
  hold: 
  \[ 0 \le \varphi(t) \le T^{\mfd, L , \omega}_t \varphi_0, \qquad \|
    \varphi \|_{\mL^{\theta}_{\mfd}([ {-} L/2, L/2]^{d})} \lesssim e^{C\| \{T^{
  \mfd, L , \omega}_t \varphi_0\}_{t \in [0,T]} \|_{CL^{\infty}([ {-} L/2,
  L/2]^{d})}}.  \]

\end{lemma}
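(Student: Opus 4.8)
The plan is to construct $\varphi$ as the limit of the solutions to the discrete equations, where positivity and comparison are elementary, and then to transfer the bounds to the continuum via Theorem~\ref{thm:convergence_PAM_Dirichlet}. For each $n$, let $\varphi^n \colon [0,\infty) \times \Lambda_n^L \to \RR$ solve the finite-dimensional ODE
\[
  \partial_t \varphi^n = \mH^{n,\omega}_{\mfd,L}\varphi^n - \nu(\varphi^n)^2, \qquad \varphi^n(0) = \varphi_0^n,
\]
with $\varphi_0^n \ge 0$ chosen so that $\mE^n_{\mfd}\varphi_0^n \to \varphi_0$. Since the right-hand side is locally Lipschitz this has a unique maximal solution, and I would first establish two-sided bounds for it.

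The generator $\mH^{n,\omega}_{\mfd,L} = \Delta^n_{\mfd} + \xi^n - c_n 1_{\{d=2\}}$ has nonnegative off-diagonal entries, coming from the nearest-neighbour Laplacian, so the semigroup $T^{n,\mfd,L,\omega}_t$ is positivity preserving on the box with absorbing boundary. Writing the equation as $\partial_t\varphi^n = (\mH^{n,\omega}_{\mfd,L} - \nu\varphi^n)\varphi^n$, the $\nu\varphi^n$ term is diagonal and preserves this Metzler structure, so $\varphi_0^n \ge 0$ forces $\varphi^n(t) \ge 0$. For the upper bound, set $r^n(t) = T^{n,\mfd,L,\omega}_t\varphi_0^n - \varphi^n(t)$; then $r^n(0) = 0$ and Duhamel gives $r^n(t) = \nu\int_0^t T^{n,\mfd,L,\omega}_{t-s}[(\varphi^n)^2(s)]\,\ud s \ge 0$ by positivity of the semigroup. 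Hence
\[
  0 \le \varphi^n(t) \le T^{n,\mfd,L,\omega}_t\varphi_0^n \qquad \text{for all } t \ge 0,
\]
which rules out blow-up, so the solution is global, and after taking $L^\infty$ norms yields $\sup_{t\in[0,T]}\|\varphi^n(t)\|_\infty \le \sup_{t\in[0,T]}\|T^{n,\mfd,L,\omega}_t\varphi_0^n\|_\infty$, bounded uniformly in $n$ by Lemma~\ref{lem:compact_containment_Dirichlet}.

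Feeding this uniform $L^\infty$ bound into the mild formulation, the Schauder estimates for $T^{n,\mfd,L,\omega}$ from Theorem~\ref{thm:convergence_PAM_Dirichlet}, together with the product identity \eqref{eqn:odd_even_extension_if_products} (the square is well defined since $\vt>0$), give a uniform bound for $\varphi^n$ in $\mL^\vt_{\mfd}(\Lambda_n^L)$ via a Gronwall argument; iterating the Duhamel representation and tracking the dependence on $\sup_t\|T^{n,\mfd,L,\omega}_t\varphi_0^n\|_\infty$ produces the claimed exponential constant. This bound is compact in a weaker topology, and the convergence of the linear part in Theorem~\ref{thm:convergence_PAM_Dirichlet} lets me pass to the limit and identify $\varphi = \lim_n \mE^n_{\mfd}\varphi^n$ as a mild solution satisfying $0 \le \varphi(t) \le T^{\mfd,L,\omega}_t\varphi_0$. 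Uniqueness in the continuum follows from a short-time contraction: the difference of two solutions solves a linear equation whose coefficient is controlled by the sum of their finite $L^\infty$ norms, so it vanishes first on a small interval and then on all of $[0,T]$ by iteration. The main obstacle is the $d=2$ case, where $\mH^\omega_{\mfd,L}$ is only defined through paracontrolled theory and the semigroup has no explicit kernel: the delicate point is to make the product and Schauder estimates close the fixed point in $\mL^\vt_{\mfd}$ \emph{uniformly in $n$}, so that the elementary discrete comparison bounds survive the passage to the limit, which is exactly what lets us avoid a direct continuum maximum principle for the singular Hamiltonian.
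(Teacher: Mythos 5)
Your proposal is correct and takes essentially the same route as the paper: the paper's entire proof is a one-line reference to \cite[Proposition 4.5]{PerkowskiRosati2019}, whose argument likewise combines well-posedness from the (paracontrolled) linear theory---with $-\nu\varphi^2$ treated as a locally Lipschitz perturbation in positive-regularity spaces---with the comparison $0 \le \varphi(t) \le T^{\mfd, L, \omega}_t \varphi_0$ obtained at the level of the discrete system, where positivity of the semigroup and the Duhamel argument are elementary, before passing the bounds to the limit via the convergence machinery (here, Theorem \ref{thm:convergence_PAM_Dirichlet}). Your closing observation, that the Metzler structure of the discrete generator substitutes for a continuum maximum principle for the singular Hamiltonian, is precisely the mechanism the cited argument relies on.
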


The proof is analogous to the one of \cite[Proposition 4.5]{PerkowskiRosati2019}. 
We thus arrive at the following
description of the limit points of $\{ \mu^{n,L}\}_{n \in \NN}$.

\begin{theorem}\label{thm:convergence_defn_killed_rsbm} 
  
  For any \(\omega \in \Omega\) and \(L \in 2 \NN\), under Assumption
  \ref{assu:framework_krsbm}, there exists $\mu^L(\omega) \in C(\RR_{\ge 0};
  \mM^L_0)$ such that $\mu^{n,L}(\omega) \to \mu^L(\omega)$ in distribution in
  $\DD(\RR_{\ge 0}; \mM^L_0)$. The process $\mu^L(\omega)$ is the unique (in
  law) process in $C(\RR_{\ge 0}; \mM^L_0)$ which satisfies one (and then all)
  of the following equivalent properties with $ \mF^{\omega} = \{ \mF_t^{\omega}
  \}_{t \ge 0}$ being the usual augmentation of the filtration generated by
  $\mu^L(\omega)$.
  
  \begin{enumerate}[label=(\roman*)] 
    \item For any $t \ge 0$ and $\varphi_0 \in C_c^{\infty}(({-}L/2, L/2)^d),
      \varphi_0 \ge 0$ and for \(U^{\mf{d}, L, \omega}_t \varphi_0\) the
      solution to Equation \eqref{eqn:PDE_laplace_duality_continuous_dirichlet}
      with initial condition $\varphi_0$ the process
      \[ N^{\varphi_0}_t(s) = e^{-\langle \mu^{L}(\omega, s), U^{\mf{d}, L ,
      \omega}_{t {-} s} \varphi_0 \rangle }, \qquad s \in [0, t] \]
      is a bounded continuous $\mF^{\omega}-$martingale.
		
    \item  
      For any $\varphi \in \mD_{\mH_{\mfd, L}^{\omega}}$ the process: 
      \[ 
	K^{\varphi}(t) = \langle \mu^L(\omega, t), \varphi \rangle{-} \langle
	\delta_0, \varphi \rangle {-}\int_0^t \ud r \ \langle \mu^L(\omega, r),
	\mH_{\mfd, L}^{\omega} \varphi \rangle, \qquad t \in [0, T] 
      \] 
      is a continuous $\mF^{\omega}-$martingale, square-integrable on $[0,T]$
      for all $T > 0$, with quadratic variation 
      \[
	\langle K^{\varphi} \rangle_t =2\nu  \int_0^t \ud r \ \langle
	\mu^L(\omega, r), \varphi^2 \rangle.  
      \] 
  \end{enumerate}
      
\end{theorem}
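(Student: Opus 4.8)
The plan is to follow the strategy used for the rSBM on the full space in \cite[Section~4]{PerkowskiRosati2019}, adapting it to the killed dynamics and the Dirichlet scale. The preceding two lemmas already show that $\{\mu^{n,L}(\omega)\}_n$ is tight in $\DD(\RR_{\ge 0};\mM_0^L)$ with every limit point supported on $C(\RR_{\ge 0};\mM_0^L)$, so the task reduces to identifying the law of any limit point. I would do this through a Laplace-duality argument that simultaneously yields characterisation (i), uniqueness of the limit (hence convergence of the full sequence), and, with a little more work, the equivalent formulation (ii).

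First I would set up a \emph{discrete duality}. Fix $t>0$ and $\varphi_0\in C_c^\infty(({-}L/2,L/2)^d)$ with $\varphi_0\ge0$, and let $\varphi^n$ solve the discrete semilinear equation associated with the generator $\mL^{n,\omega}_L$, i.e. the lattice analogue of \eqref{eqn:PDE_laplace_duality_continuous_dirichlet} with linear part $\mH^{n,\omega}_{\mf{d},L}$ and Dirichlet data on $\partial\Lambda_n^L$. Applying Dynkin's formula to the exponential functional $\eta\mapsto\exp({-}\langle\mu(\eta),\varphi^n_{t-s}\rangle)$ and inserting the explicit form of $\mL^{n,\omega}_L$, one checks that the nonlinearity produced by expanding the branching and killing terms is, under the scaling $\vr=d/2$ and Assumption~\ref{assu:noise}, asymptotically $\nu\,(\varphi^n)^2$ with $\nu=\EE\Phi_+$; choosing $\varphi^n$ to solve precisely this discrete equation makes $s\mapsto\exp({-}\langle\mu^{n,L}(\omega,s),\varphi^n_{t-s}\rangle)$ a bounded (between $0$ and $1$, since $\varphi^n\ge0$) martingale. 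This is where both the generator term and the quadratic nonlinearity of the limit originate, exactly as in \cite{PerkowskiRosati2019}.

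Next I would pass to the limit. By Theorem~\ref{thm:convergence_PAM_Dirichlet} the discrete linear semigroups converge, and feeding this into the fixed-point argument of Lemma~\ref{lem:existence_solutions_duality_PDE_dirichlet} gives $\mE^n\varphi^n_r\to U^{\mf{d},L,\omega}_r\varphi_0$ in $\mL^\vartheta_{\mf{d}}([{-}L/2,L/2]^d)$, uniformly in $r\in[0,t]$. Since this space embeds continuously into $C_0(({-}L/2,L/2)^d)$ by Remark~\ref{rem:continuous_embedding_dirichlet}, the convergence holds in the uniform norm; combined with the mass bound of Lemma~\ref{lem:compact_containment_Dirichlet}, this lets me pass to the limit in the pairing and obtain $\langle\mu^{n,L}(\omega,s),\varphi^n_{t-s}\rangle\to\langle\mu^L(\omega,s),U^{\mf{d},L,\omega}_{t-s}\varphi_0\rangle$ along a convergent subsequence. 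Using boundedness of the exponentials to pass to the limit in the discrete martingale then shows that $N^{\varphi_0}_t$ is a bounded continuous $\mF^\omega$-martingale, i.e. property (i) holds for every limit point.

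Finally, property (i) pins down every one-dimensional Laplace transform $\EE[\exp({-}\langle\mu^L(\omega,t),\varphi_0\rangle)]$ and, through the Markov property, all finite-dimensional distributions; uniqueness of the limit law follows, upgrading subsequential convergence to convergence of the whole sequence. The equivalence (i)$\Leftrightarrow$(ii) is then obtained as in \cite{PerkowskiRosati2019}: for (ii)$\Rightarrow$(i) one applies It\^o's formula to $e^{-\langle\mu^L(s),U^{\mf{d},L,\omega}_{t-s}\varphi_0\rangle}$ and uses that $U^{\mf{d},L,\omega}\varphi_0$ solves \eqref{eqn:PDE_laplace_duality_continuous_dirichlet}, so that the drift created by the $-\nu\varphi^2$ term is exactly cancelled by the $\tfrac12\,\mathrm{d}\langle K\rangle$ It\^o correction coming from the $2\nu\langle\mu^L,\varphi^2\rangle$ quadratic variation; conversely (i)$\Rightarrow$(ii) is recovered by expanding the family of Laplace martingales along $\varphi_0=\lambda\varphi$ and reading off the first two orders in $\lambda$. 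I expect the main obstacle to be the third step, and specifically the interaction between the Dirichlet boundary and the vague topology: the limit measures $\mu^L(\omega,s)$ are only controlled against test functions vanishing at $\partial[{-}L/2,L/2]^d$, so one must verify that the killed particles genuinely disappear in the limit and that the pairing against the boundary-vanishing solutions $U^{\mf{d},L,\omega}_{t-s}\varphi_0$ is stable, rather than leaking mass onto the boundary; this is precisely the point that distinguishes the killed, finite-volume construction from the full-space rSBM of \cite{PerkowskiRosati2019}.
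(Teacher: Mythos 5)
Your proposal takes essentially the same approach as the paper: the paper's proof is a two-line deferral to \cite[Theorem 2.13]{PerkowskiRosati2019}, noting only that one now tests against functions with zero boundary conditions and uses the Dirichlet results of Section \ref{sect:PAM_dirichlet_bc}, and your Laplace-duality argument (discrete duality via Dynkin's formula, passage to the limit via Theorem \ref{thm:convergence_PAM_Dirichlet}, Lemma \ref{lem:existence_solutions_duality_PDE_dirichlet}, Remark \ref{rem:continuous_embedding_dirichlet} and Lemma \ref{lem:compact_containment_Dirichlet}, then identification of the law and the equivalence of (i) and (ii)) is precisely the content of that reference adapted to the killed setting. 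You also correctly isolate the one genuinely new point --- the interaction of the Dirichlet boundary and the vague topology with the boundary-vanishing test functions --- which is exactly the difference the paper's proof highlights.
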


\begin{proof}
  The proof is almost identical to the one of \cite[Theorem
  2.13]{PerkowskiRosati2019}. The main difference is that here we only test
  against functions with zero boundary conditions and thus use the results from
  Section \ref{sect:PAM_dirichlet_bc}.
\end{proof}

We call the above process the killed rSBM on $(-\frac{L}{2},\frac{L}{2})^d$.
Note that we can interpret the killed rSBM as an element of $C(\RR_{\ge 0};
\mM(\RR^d))$ by extending it by zero, i.e.  \(\mu^L(\omega, t, A) = \mu^L(
\omega, t, A \cap ({-} L/2, L/2)^d)\) for any measurable \(A \subset
\RR^d\). This allows us to couple infinitely many killed rSBMs with a rSBM
on $\RR^d$ so that they are ordered in the natural way.

\begin{corollary}\label{cor:persistence_coupling_all_L}
  For any \(\omega \in \Omega,\) under Assumption \ref{assu:framework_krsbm},
  there exists a process 
  \[(\mu(\omega, \cdot), \mu^2(\omega, \cdot),
  \mu^4(\omega, \cdot), \dots)\] taking values in
  $C(\RR_{\ge 0}; \mM(\RR^d))^\NN$ (equipped with the product topology) such
  that $\mu$ is an rSBM and $\mu^L$ is a killed rSBM for all $L \in 2 \NN$ (all
  associated to the environment $\{\xi^n\}_{n \in \NN}$), and such that:
  \begin{equation}\label{eqn:ordered-killed-rsbm}
    \mu^2( \omega, t,A) \le \mu^4(\omega, t,A) \le \dots \le \mu(\omega, t,A)
  \end{equation}
  for all $t \ge 0$ and all Borel sets $A \subset \RR^d$.
\end{corollary}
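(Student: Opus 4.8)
The plan is to realize the entire family at the discrete level on a single probability space, where the ordering is available pathwise, and then to pass to the limit simultaneously. Fix $\omega \in \Omega$ and work throughout under the quenched law $\PP^{\omega, n}$. The crucial observation is that the full-space process $\mu^n(\omega)$ and all the killed processes $\mu^{n, L}(\omega)$, $L \in 2\NN$, are obtained as deterministic functionals of the \emph{single} labelled Markov process $X^n(\omega)$ of Definition \ref{def:labelled_markov_process_definition}: the killed configurations $X^{n, L}$ are produced from $X^n$ by sending to the cemetery every particle that leaves the box of size $L$. Consequently the vector
\[
  V^n(\omega) = \big(\mu^n(\omega), \mu^{n, 2}(\omega), \mu^{n, 4}(\omega), \dots\big)
\]
is a single random element, and the comparison \eqref{eqn:comparison_discrete_measures} holds $\PP^{\omega, n}$-almost surely along every path. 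After extending each killed measure by zero to a measure on $\RR^d$ (as explained before the statement), we may regard $V^n(\omega)$ as taking values in the countable product $\DD(\RR_{\ge 0}; \mM(\RR^d)) \times \prod_{L \in 2\NN} \DD(\RR_{\ge 0}; \mM^L_0)$ of Polish spaces.

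I would then establish tightness of the laws of $V^n(\omega)$ in this product. Since tightness in a countable product of Polish spaces is equivalent to coordinatewise tightness, this is immediate: each killed coordinate is tight by the tightness lemma established above, and the full-space coordinate $\mu^n(\omega)$ is tight by \cite{PerkowskiRosati2019}. By Prokhorov's theorem I extract a weakly convergent subsequence $V^{n_k}(\omega) \Rightarrow V(\omega) = (\mu(\omega), \mu^2(\omega), \mu^4(\omega), \dots)$, and apply Skorohod's representation theorem to obtain almost surely convergent versions on a common probability space. Each marginal is then identified through a uniqueness statement: by Theorem \ref{thm:convergence_defn_killed_rsbm} every limit point of $\mu^{n, L}(\omega)$ is the killed rSBM on $(-L/2, L/2)^d$, which is unique in law, and by \cite{PerkowskiRosati2019} every limit point of $\mu^n(\omega)$ is the rSBM, again unique in law. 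Hence $V(\omega)$ has exactly the required marginals.

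It remains to pass the ordering to the limit. Working with the Skorohod versions, for every nonnegative $\varphi \in C_c^{\infty}\big((-L/2, L/2)^d\big)$ the evaluation $\mu \mapsto \mu(\varphi)$ is continuous on $\mM^L_0$ and on $\mM(\RR^d)$, so the almost sure inequalities $\mu^{n_k, L}(\omega, t)(\varphi) \le \mu^{n_k, L+2}(\omega, t)(\varphi) \le \dots \le \mu^{n_k}(\omega, t)(\varphi)$ pass to the limit and yield $\mu^L(\omega, t)(\varphi) \le \mu^{L+2}(\omega, t)(\varphi) \le \dots \le \mu(\omega, t)(\varphi)$ for all such $\varphi$ and all $t$; since this holds for all nonnegative test functions supported in the respective open boxes, a standard approximation argument upgrades it to the measure inequality \eqref{eqn:ordered-killed-rsbm} for all Borel $A \subset \RR^d$. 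The main obstacle I expect is the topological bookkeeping across the different spaces: the killed processes live in the vague topology on the \emph{open} boxes $\mM^L_0$, whereas the full-space process lives in $\mM(\RR^d)$, and the ordering relates measures on nested domains. The point that makes this work is that every limit point lies in $C(\RR_{\ge 0}; \mM^L_0)$, i.e.\ charges only the open box and places no mass on the boundary, so that extension by zero is compatible with the ordering and the closedness of the ordered cone under the joint convergence is preserved.
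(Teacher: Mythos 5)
Your proposal is correct and follows essentially the same route as the paper: the coupling of all processes through the single labelled particle system, tightness in the product topology obtained coordinatewise, identification of the marginals via the uniqueness statements of Theorem \ref{thm:convergence_defn_killed_rsbm} and of \cite{PerkowskiRosati2019}, and passage of the ordering to the limit by testing against nonnegative functions adapted to the vague topology. The only cosmetic difference is in the last step: the paper tests against $\varphi \in C_b(\RR^d)$ multiplied by cut-offs $\chi^m \in C_c^{\infty}$, while you test directly against compactly supported smooth functions and then upgrade to all Borel sets by inner regularity, using that $\mu^L$ charges no mass outside the open box --- the same maneuver in slightly different clothing.
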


\begin{proof}
  The construction~\eqref{eqn:defn_of_mu_n_dirichlet} of $\mu^n$ and $\mu^{n,L}$
  based on the labelled particle system gives us a coupling $(\mu^n, \mu^{n,2},
  \mu^{n,4}, \dots)$ such that for all \(\omega \in \Omega\)
  \[
     \mu^{n,2}(\omega, t,A) \le \mu^{n,4}(\omega, t,A) \le \dots \le
     \mu^n(\omega,t,A)
  \]
  for all $t \ge 0$ and all Borel sets $A \subset \RR^d$, where as above we
  extend $\mu^{n,L}$ to $\RR^d$ by setting it to zero outside of $(-\frac{L}{2},
  \frac{L}{2})^d$ (cf. Equation \eqref{eqn:comparison_discrete_measures}). By
  \cite[Theorem 2.13]{PerkowskiRosati2019} and
  Theorem~\ref{thm:convergence_defn_killed_rsbm} we get tightness of the
  finite-dimensional projections $(\mu^n, \mu^{n,2}, \dots, \mu^{n,L})$ for $L
  \in 2\NN$, and this gives us tightness of the whole sequence in the product
  topology. Moreover, for any subsequential limit $(\mu, \mu^2, \mu^4, \dots)$
  we know that $\mu$ is an rSBM and $\mu^L$ is a killed rSBM on $(-\frac{L}{2},
  \frac{L}{2})^d$.
  It is however a little subtle to obtain the
  ordering~\eqref{eqn:ordered-killed-rsbm}, because we only showed tightness in
  the vague topology on $\mM_0^L$ for the $\mu^{n,L}$ component. So we introduce
  suitable cut-off functions to show that the ordering is preserved along any
  (subsequential) limit: Let \(\chi^m \in C^{\infty}_{c}( ({-} L/2, L/2)^d)\),
  \(\chi^m \geq 0\) such that \(\chi^m =1\) on a sequence of compact sets
  \(K^m\) which increase to \( ({-} L/2, L/2)^d\) as $m \to \infty$. Note that
  on compact sets the sequence \(\mu^{n, L}\) converges weakly (and not just
  vaguely). We then estimate (in view of Equation
  \eqref{eqn:comparison_discrete_measures}) for $\varphi \in C_b(\RR^d)$ with
  $\varphi \ge 0$: 
  \begin{align*} 
    \langle \mu^L (t), \varphi \rangle =  \lim_{m \to \infty} \langle \mu^L (t),
    \varphi \cdot \chi^m \rangle & = \lim_{m \to \infty} \lim_{n \to \infty}
    \langle \mu^{n, L} (t), \varphi \cdot \chi^m \rangle \\ & \leq \lim_{m \to
    \infty} \langle \mu (t) , \varphi \cdot \chi^m \rangle = \langle \mu (t) ,
    \varphi \rangle, 
  \end{align*} 
  and similarly we get $\langle \mu^L (t), \varphi \rangle \le
  \langle \mu^{L'} (t), \varphi \rangle$ for $L \le L'$. Since a signed measure
  that has a positive integral against every positive continuous function must
  be positive, our claim follows.
\end{proof}

\bibliographystyle{plain}
\bibliography{rsbm}

\end{document}